\theoremstyle{plain}
\numberwithin{equation}{section}
\newtheorem{theorem*}{Theorem}
\newtheorem{definition*}{Definition}
\newtheorem{lemma}{Lemma}
\newtheorem{proposition*}{Proposition}
\newcommand{\m}[1]{\ensuremath{\mathbf{#1}}}
\def\ra{\rangle} \def\la{\langle}
\begin{document}

\title{Acoustic scattering and field enhancement \\through a subwavelength aperture}
\author{Ying Liang\footnote{Department of Mathematics, Purdue University, West Lafayette, Indiana 47907, USA. (liang402@purdue.edu).}
\and Jun Zou\footnote{Department of Mathematics, The Chinese University of Hong Kong, Shatin, N.T., Hong Kong. The work of this author was substantially supported by Hong Kong RGC General Research Fund (Project 14306718) and NSFC/Hong Kong RGC Joint Research Scheme 2016/17 (Project N\_CUHK437/16). (zou@math.cuhk.edu.hk).}}
\date{}
\maketitle\maketitle

\begin{abstract}
This work is concerned with the acoustic field enhancement through a circular hole perforated in a rigid plate of finite thickness. We construct the Green's functions for the subdomains in this structure and derive the boundary-integral equations to investigate the mechanism of the scattering and field enhancement. The asymptotic expansions of Fabry-Perot type resonances with respect to the size of the aperture are presented, and the quantitative analysis of the field enhancement at resonant frequencies is carried out with both the enhancement order and shapes of the resonant modes characterized. We also investigate the transmission in the nonresonant quasi-static regime, which suggests that the velocity field is enhanced due the the fast transition of pressure field across the aperture.
\end{abstract}

{\footnotesize {\bf Keywords}:
acoustic field enhancement, subwavelength structures, Fabry-Perot resonance, Helmholtz equation, }\\
\indent{\footnotesize asymptotic analysis}

 {\footnotesize {\bf Mathematics Subject Classification(MSC2000)}: 35C20, 35J05, 35P30, 78A45}

%
%
%
%

\begin{section}{Introduction}
The studies of electromagnetic and acoustic scattering by subwavelength structures were extensively pursued in recent years for the possibilities of realizing  transmitted waves with novel wave characteristics utilizing the local resonances of subwavelength structures, see \cite{lin2017scattering, lin2018scattering,lezec2002beaming,martin2004optical,astilean2000light, holley2019extraordinary}, for instance.
The acoustic transmission phenomenon for the structured plates is analogous to the phenomenon of electromagnetic waves, which motivates studies exploring the transferability of extraordinary optical transmissions \cite{ebbesen1998extraordinary} to the acoustic case. In particular, extensive research effort has been devoted to the investigations on the transmission or diffraction by various acoustical gratings, e.g., one-dimensional periodic slits in a rigid screen \cite{miles2002resonant}, a one-dimensional grating of finite grating thickness on steel rods  \cite{zhang2005acoustic}, and a single hole in a thick plane \cite{seo2000acoustic}, etc. It was reported in \cite{hou2013enhanced, christensen2008theory} that the transmission peaks of the hole array in thick plates are related to the Fabry-Perot-like (FP-like) resonances inside the holes.

In many cases, the enhancement of the transmitted wave is very sensitive to the geometry and the wave frequency, which motivates us to study the phenomenon in a simple configuration, where an accurate description of the acoustic field can be obtained with analytical approaches. The object of study in our work is a subwavelength hole perforated in a rigid plate of finite thickness which generates the extraordinary enhancement of acoustic transmission and works as the building block of acoustic meta-material in various applications, see  \cite{wu2018perspective, yang2017sound,ammari2017sub, ammari2015mathematical}. 
In the literature, the scattering phenomenon in this configuration has been studied by physical experiments \cite{hou2013enhanced} and mode matching \cite{seo2000acoustic} previously. 
Our goal is to present a rigorous quantitative analysis of the field enhancement for the acoustic scattering and give a complete picture for the mechanism of such enhancement utilizing integral equations, asymptotic analysis and other techniques.


 In this work, we will investigate the mechanism of resonant scattering by the subwavelength hole perforated in a rigid plate as depicted in Figure \ref{cross}. Helmholtz equation is used as the mathematical model of the diffraction phenomenon of the subwavelength structure. 
  As studied in previous works (see, e.g., \cite{ammari2002analysis,Ori2020}), the Helmholtz equation with certain boundary conditions possesses a unique solution for wavenumber $k$ with $\text{Im}\, k>0$. Therefore, we can deduce from the analytic continuation that there exists a unique solution for all complex wavenumbers $k$ except for a countable number of points, which are poles of the resolvent associated with the transmission problem.
These poles are referred to as the  resonant frequencies (or simply resonances) of the scattering problems, and these associated nontrivial solutions are referred to as the resonant modes. 
If the wavenumber $k$ of the incident wave is close to the real part of a resonance and the imaginary part of the resonance is small, an enhancement of the field will be observed, as we will elaborate in Section~\ref{quantreso}.
 To this end, we utilize the analytical tools to justify the existence of the resonant frequencies and investigate the order of field enhancement of the resonant frequencies.

\begin{figure}
\centering{
\includegraphics[width=5in]{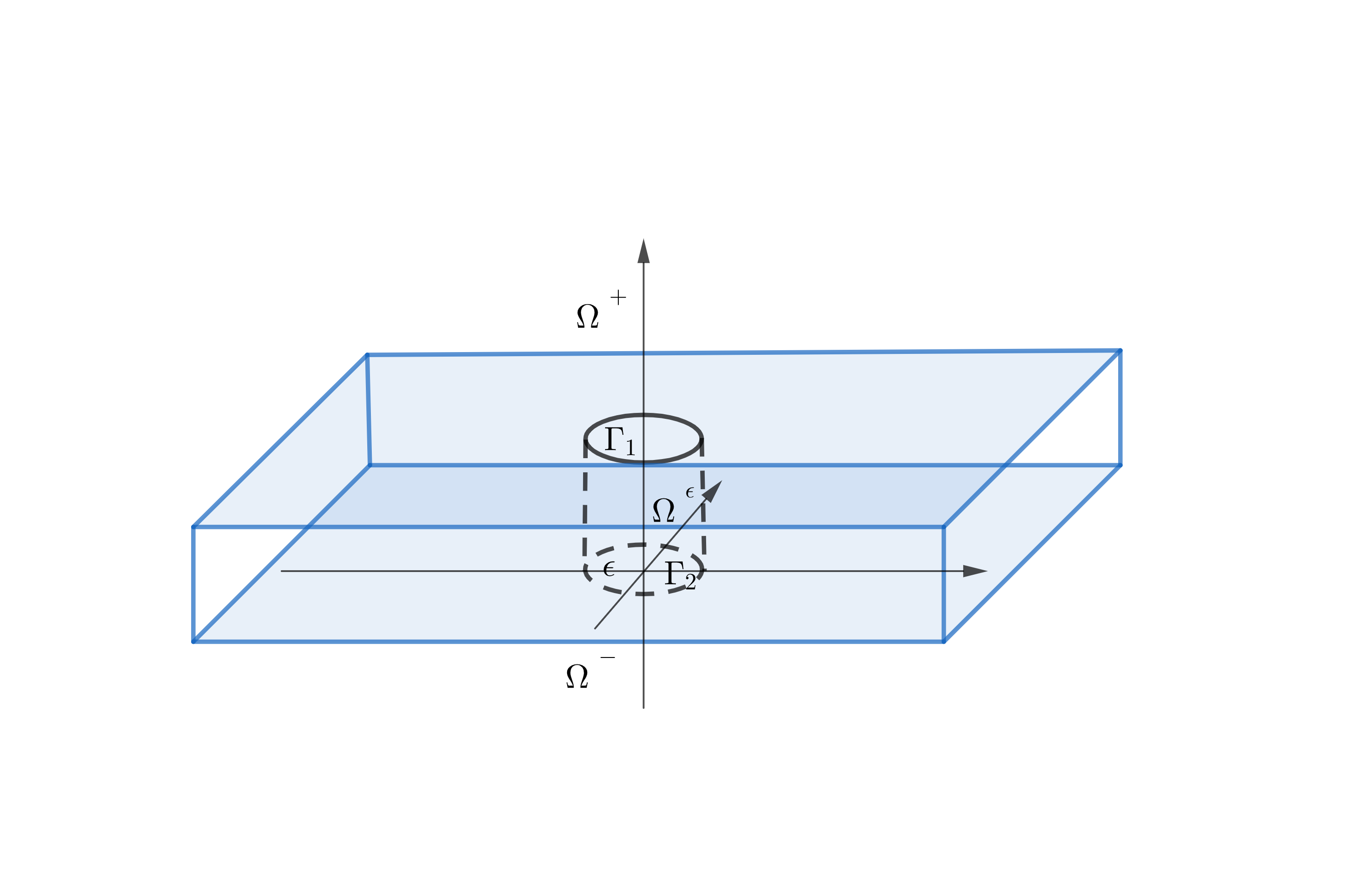}}
\caption{infinite plate of finite thickness with circular hole}\label{cross}
\end{figure}

Throughout this work, we will denote the radius of the hole by $\varepsilon$ and the thickness of the plate by $\ell$ as shown in Figure \ref{cross}, and we shall focus on the subwavelength regime, i.e., the radius $\varepsilon$ much smaller than the thickness of the board $\ell$ and the wavelength of the incident field $\lambda$.
 The analysis starts with the construction of a Green's function with the homogeneous Neumann boundary condition on each subdomain, and  the formulation of the boundary-integral equations equivalent to the original problem. 
 Asymptotic analysis of the integral equations will be carried out to demonstrate that the acoustic field enhancement through a subwavelength hole can be attributed to the resonances, that is, the transmission peaks at specific frequencies corresponding to particular resonances, referred to as Fabry-Perot type resonances. 
 The main contributions of this paper are proving rigorously the existence of Fabry-Perot type resonances, deriving the asymptotic expansions for those resonances, and further analyzing quantitatively the field enhancement near these resonant frequencies and also the asymptotic behavior of fields at nonresonant frequencies in the quasi-static regime. 
More precisely, it is proved that the enhancement of the pressure field  with an order of $O(\varepsilon^{-2})$ occurs at the Fabry-Perot type resonances, and the enhancement of the velocity field with an order of $O((k\ell)^{-1})$ occurs at the nonresonant frequencies in the quasi-static regime.

It should be mentioned that the mathematical studies on the acoustic impedance of perforated plate with a circular aperture have also been carried out previously in \cite{Ori2020}, where the matched asymptotic
expansion techniques are applied to derive the analytical formula of the acoustic impedance. On the other hand, the technique of asymptotic expansion has been applied to study the electromagnetic field enhancement of single slits in \cite{lin2017scattering, lin2018scattering}. We also refer to a related research of scattering by subwavelength cavities in \cite{bonnetier2010asymptotic}, where the layer potential techniques and Gohberg-Sigal theory are applied to study the resonances. For a comprehensive review of these techniques we refer the readers to \cite{ammari2009layer}.

The rest of the paper is organized as follows. In Section~\ref{problem}, we describe our model mathematically using the Helmholtz equation. In Section~\ref{inteq}, we introduce the Green's functions corresponding to the structure and derive the boundary-integral equations. In Section~\ref{asym} we carry out the asymptotic analysis of the integral equations and study the corresponding integral operators. We present the main results on the asymptotic expansions of resonant frequencies in Section~\ref{secresonance}. Section~\ref{quantreso} is devoted to the quantitative analysis on the enhancement of the acoustic fields at the resonant frequencies. Finally in Section~\ref{quantnon} we investigate the asymptotic behavior of scattered fields in the nonresonant quasi-static regime.

\end{section}
\begin{section}{Problem description}\label{problem}

 Throughout this work, we consider the time-harmonic acoustic scattering wave through a circular hole in an infinite rigid plate with finite thickness depicted in Figure \ref{cross}, and use the standard cylindrical coordinate system $\m{r} =(r,\theta, z)$.
 As mentioned in the Introduction, we use $k$ to denote the wavemunber, $\varepsilon$ to denote the radius of the hole, $\ell$ to denote the thickness of the plate, and $\Omega^\varepsilon$ to denote the cylindrical cavity, that is, $\Omega^\varepsilon: = \{\m{r}=(r,\theta, z):0\leq r< \varepsilon,-\pi< \theta\leq \pi, 0<z< \ell\}$.
 We further  denote the domain exterior to the perforated plate by $\hat{\Omega}^\varepsilon: =\Omega^+\cup \Omega^\varepsilon\cup\Omega^- $, where $\Omega^+ = \{\m{r}=(r,\theta, z):-\pi< \theta\leq \pi, z>\ell\}$ and $\Omega^-= \{\m{r}=(r,\theta, z): -\pi< \theta\leq \pi, z<0\}$ denote the semi-infinite domains above and below the plate, respectively.

 The total pressure field, denoted by $u_\varepsilon$, consists of three parts in the upper domain $\Omega^+$: the incident field $u^i$, the reflected field $u^r$, and the scattered field $u_\varepsilon^s$  radiating from the aperture $\Gamma_1: = \{\m{r}=(r,\theta,z):0\leq r\leq \varepsilon,-\pi< \theta\leq \pi, z=\ell\}$. 
 In the lower domain $\Omega^-$, the total pressure field only consists of the scattered field $u_\varepsilon^s$ radiating from the aperture $\Gamma_2: = \{\m{r}=(r,\theta,z):0\leq r\leq \varepsilon, -\pi< \theta\leq \pi, z=0\}$.
  We assume that the total field $u_\varepsilon$ satisfies the boundary condition $\frac{\partial u_\varepsilon}{\partial\nu}=0$ on the surface of this perforated plate (sound hard boundary condition), where $\nu$ is the outward normal vector pointing to $\hat{\Omega}^\varepsilon$. 
 In addition, at infinity, the scattered field $u_\varepsilon^s$ satisfies the Sommerfeld radiation condition \cite{colton2012inverse}.
 Now we are ready to formulate the diffraction problem of our interest as follow:
\begin{equation}
\begin{cases}
\Delta  u_\varepsilon+k^2 u_\varepsilon = 0  &\text{ in }  \hat{\Omega}^\varepsilon,\\
u_\varepsilon = u^i+u^r+u_\varepsilon^s&\text{ in } \Omega^+,\\
u = u_\varepsilon^s&\text{ in }\Omega^-,\\
\dfrac{\partial u_\varepsilon}{\partial\nu}=0&\text{ on } \partial \hat{\Omega}^\varepsilon,\\
\lim\limits_{|\m{r}|\to\infty} \sqrt{|\m{r}|}\left(\dfrac{\partial u_\varepsilon^s}{\partial |\m{r}|}-ik u_\varepsilon^s\right)  = 0.&
\end{cases}\label{inieq}
\end{equation}

In this work, the incident field is a time-harmonic acoustic plane wave given by  $u^i(\m{r}) = e^{ik (d_1 r\cos\theta -d_3(z-\ell))}$, with the direction unit vector $\m{d} = (d_1,0,-d_3)$, and the reflected field $u^r(\m{r}) = e^{ik (d_1 r\cos\theta+d_3(z-\ell))}$, with the direction unit vector $\m{d}' = (d_1,0, d_3)$. 
For clarity of exposition, we take the thickness of the plate $\ell=1$ in the following sections. If the thickness of the plate $\ell\neq 1$ and $\varepsilon\ll \ell$, the theoretical results  for $\ell=1$ can be generated directly by a scaling argument.

\end{section}
\begin{section}{Boundary-integral equations of the scattering problem}\label{inteq}
In this section, we derive the Green's functions of the Helmholtz equation \eqref{inieq} satisfying certain boundary conditions, and then reformulate \eqref{inieq} as a system of boundary-integral equations. 
  Let us first introduce the Green's function  $g^e(k;\m{r},\m{r}')$ which satisfies
\begin{eqnarray*}
\Delta  g^e(k;\m{r},\m{r}')+k^2 g^e(k;\m{r},\m{r}')=\delta(\m{r}-\m{r}'), \quad\m{r},\m{r}'\in \Omega^{\pm},
\end{eqnarray*}
with boundary conditions $\frac{\partial g^e(k;\m{r},\m{r}')}{\partial\nu_{\m{r}'}} = 0$ for $\m{r}'\in\partial\Omega^+$, $\m{r}\in\Omega^+$ or  $\m{r}'\in\partial\Omega^-$, $\m{r}\in\Omega^-$, where $\nu$ denotes the normal vector pointing to the
exterior domain $\Omega^{\pm}: = \Omega^+\cup\Omega^-$. One can easily check that the Green's function  $g^e(k;\m{r},\m{r}')$ takes the following form:
\begin{equation}
 g^e(k;\m{r},\m{r}') = \phi(k;|\m{r}-\m{r}'|)+\phi(k;|\bar{\m{r}}-\m{r}'|),\label{gex}
\end{equation}
where $\phi(k;|\m{r}-\m{r}'|)$ denotes the free space Green's function $-\frac{e^{ik|\m{r}-\m{r}'|}}{4\pi |\m{r}-\m{r}'|}$ in $\mathbb{R}^3$, and 
\begin{equation*}
\bar{\m{r}} = \begin{cases}
( r, \theta, 2-z)&\text{ if } \m{r},\m{r'}\in \Omega^+,\\
( r, \theta, -z)&\text{ if } \m{r},\m{r'}\in \Omega^-.
\end{cases}
\end{equation*}
Taking advantage of simplicity of the geometry, we will derive the Green's function corresponding to the cylindrical domain $\Omega^\varepsilon$ using eigenfunctions, present a decomposition of the Green's function for further calculation, and formulate the boundary-integral equations.


\begin{subsection}{ Green's function for the cylindrical cavity}
Now we construct the Green's function $g^i_\varepsilon(k;\m{r},\m{r}')$ satisfying
the homogeneous Neumann boundary conditions for $\Omega^\varepsilon$ as shown in Figure \ref{cross}. 
The Green's function $g^i_\varepsilon(k;\m{r},\m{r}')$ satisfies
\begin{equation}
\begin{cases}
\Delta  g^i_\varepsilon(k;\m{r},\m{r}')+k^2 g^i_\varepsilon(k;\m{r},\m{r}')= \delta(\m{r}-\m{r}'), \quad&\m{r},\m{r}'\in \Omega^{\varepsilon}, \\
\dfrac{\partial g^i_\varepsilon }{\partial\nu_{\m{r}'}} = 0,\quad &\m{r}'\in\partial\Omega^\varepsilon,
\end{cases}\label{cavity}
\end{equation}
where $\partial\Omega^{\varepsilon}$ consists of three parts, $\Gamma_1$, $\Gamma_2$ and $\Gamma_r = \{\m{r}=(r,\theta, z)\in \partial\Omega^{\varepsilon} : r = \varepsilon\}$.
 For integers $m\geq 1$, $n\geq 0$, and $j\geq 0$, we define the basis functions:
 \begin{equation}\label{eigenf}
 \phi_{mnje}(\m{r}) = J_n(\alpha_{mn} r)  \cos(n\theta)\cos(\gamma_j z), \quad  \phi_{mnjo}(\m{r}) = J_n(\alpha_{mn} r) \sin(n \theta) \cos(\gamma_j z),
\end{equation}
 where $\gamma_j =j\pi$, $J_n$ denotes the Bessel function of order $n$, $\alpha_{mn} = \frac{q_{mn}}{\varepsilon}$ and  $q_{mn}$ denotes the $m$-th smallest root of the equation
\begin{equation*}
\frac{\partial J_n(r)}{\partial r}=0.
\end{equation*}
It is noted that  $q_{1,0} = 0$, thus  $\alpha_{1,0} =0$.  
The basis functions $\{\phi_{mnje}, \phi_{mnjo}\}$ defined in \eqref{eigenf} are eigenfunctions of the Laplacian operator, and orthogonal to each other. More precisely, considering the Laplacian operator in cylindrical coordinates
$$\Delta \phi = \frac{1}{r} \frac{\partial}{\partial r}\left(r\frac{\partial \phi}{\partial r}\right)+\frac{1}{r^2}\frac{\partial^2 \phi}{\partial \theta^2}+\frac{\partial^2 \phi}{\partial z^2},$$
 as the Bessel function $J_n$ satisfies
${\displaystyle r^2{\frac {d^{2}J_n}{dr^{2}}}+r{\frac{d J_n}{dr}}+(r^{2}-n^{2})J_n=0},$
one can verify that
\begin{equation}
\Delta \phi_{mnje}(\m{r}) +\left((\alpha_{mn})^2+(\gamma_j)^2\right)\phi_{mnje}(\m{r}) = 0, \quad \Delta \phi_{mnjo}(\m{r}) +\left((\alpha_{mn})^2+(\gamma_j)^2\right)\phi_{mnjo}(\m{r}) = 0.\label{eigen1}
\end{equation}
Then the expansion of $g^i_\varepsilon(k;\m{r},\m{r}')$ takes the following form: 
\begin{equation}
g^i_\varepsilon(k;\m{r},\m{r}') = \sum_{m, n}\sum_{j=1}^\infty\sum_{\xi \in\{o,e\}} c_{mnj}d_{mnj} \phi_{mnj\xi}(\m{r})\phi_{mnj\xi}(\m{r}'),\label{cavityG}
\end{equation}
where the coefficients are given by
\begin{equation*}
d_{mnj} = \dfrac{1}{\int_{\Omega^\varepsilon} (J_n(\alpha_{mn} r))^2  \cos^2(n\theta) \cos^2(\gamma_j z)d\m{r}}= \dfrac{4}{\pi (1+\delta_{0, j})
(1+\delta_{0,n})
\int_0^\varepsilon  (J_n(\alpha_{mn} r))^2 r d r},
\end{equation*}
and $c_{mnj} = \frac{1}{k^2-(\alpha_{mn})^2-(\gamma_j)^2}$. Unless otherwise specified, the notation $\sum\limits_{m,n}$ denotes the summation over integers  $m\geq 1$, $n\geq 0$.
  We refer readers to \cite{tai1994dyadic, collin1990field} for this technique of constructing Green's functions. It is noted that the expansion \eqref{cavityG} is well defined on the whole complex $k$-plane except for a countable number of points $k =\sqrt{(\alpha_{mn})^2+(\gamma_j)^2}$, which are the eigenvalues of the Laplacian operator in the cavity $\Omega^\varepsilon$. We will take a limiting procedure to evaluate $g^i_\varepsilon(k;\m{r},\m{r}')$ when $k$ turns to those eigenvalues, and take the similar procedures in subsequent analysis. 

\end{subsection}

\begin{subsection}{Green's function for the semi-infinite waveguide}
\begin{figure}
\centering{
\includegraphics[width=5in]{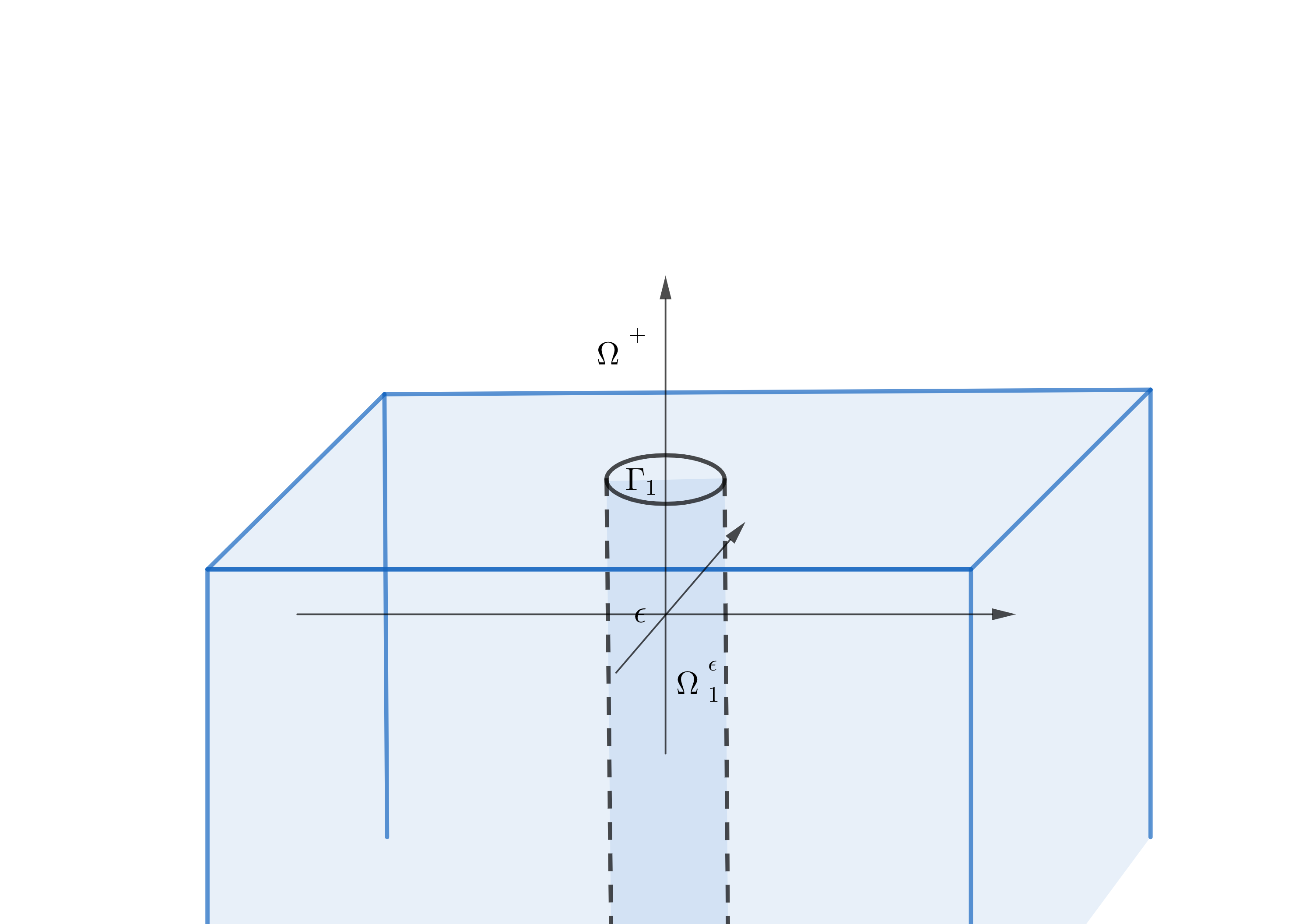}}
\caption{infinite plate of infinite thickness with circular hole}\label{cross2}
\end{figure}

To prepare for the decomposition of the Green's function $g^i_\varepsilon(k;\m{r},\m{r}')$ for $\Omega^\varepsilon$, we would like to introduce another  configuration consisting of a semi-infinite waveguide $\Omega_1^\varepsilon: = \{ \m{r} = (r,\theta, z): 0\leq r< \varepsilon,-\pi< \theta\leq \pi, z<1 \}$ and the upper semi-infinite domain $\Omega^+$ as shown in Figure \ref{cross2}. We consider the Green's function $g_0^1(k;\m{r},\m{r}')$ that satisfies the homogeneous Neumann boundary condition on $\partial\Omega^\varepsilon_1$, i.e.,
\begin{equation}
\begin{cases}
\Delta  g_0^1(k;\m{r},\m{r}')+k^2 g_0^1(k;\m{r},\m{r}')= \delta(\m{r}-\m{r}'), \quad &\m{r},\m{r}'\in \Omega^{\varepsilon}_1, \\
 \dfrac{\partial g_0^1 }{\partial\nu_{\m{r}'}} = 0,\quad &\m{r}'\in\partial\Omega^\varepsilon_1,
\end{cases}
\label{waveguide}
\end{equation}
with decaying boundary condition $\frac{\partial g_0^1}{\partial z}\to 0$ as $z\to -\infty$.
To construct $g_0^1(k;\m{r},\m{r}')$, we introduce the following set of basis functions:
\begin{equation}
\phi_{mne}(\m{r}) = J_n(\alpha_{mn} r)\cos(n\theta),\quad \phi_{mno}(\m{r}) = J_n(\alpha_{mn} r) \sin(n \theta),
 \label{eigen0}
\end{equation}
for integers $m\geq 1$ and $n\geq 0$.
The basis functions $\{\phi_{mne}, \phi_{mno}\}$ defined in \eqref{eigen0} are eigenfunctions of the Laplacian operator on $\Gamma_1$, and orthogonal to each other. Following the definition of the Bessel function $J_n$, one can verify that
\begin{equation}
\Delta \phi_{mne}(\m{r}) +(\alpha_{mn})^2\phi_{mne}(\m{r}) = 0, \quad \Delta \phi_{mno}(\m{r}) +(\alpha_{mn})^2\phi_{mno}(\m{r}) = 0.\label{eigen1}
\end{equation}
With the eigenfunctions $\{\phi_{mne}, \phi_{mno}\}$, we derive the expansion of the Green's function $g_0^1(k;\m{r},\m{r}')$ for the semi-infinite waveguide $\Omega_1^\varepsilon$ as 
\begin{equation}\label{g01}
g_0^1(k;\m{r},\m{r}') = \sum_{(m, n)\neq (1,0)}^\infty  \sum_{\xi \in\{o,e\}}c_{mn} d_{mn}\phi_{mn\xi}(\m{r})\phi_{mn\xi}(\m{r}')(e^{-\beta_{mn} |z-2+z'|}+e^{-\beta_{mn} |z-z'|}),
\end{equation}
where 
$$d_{mn} = \dfrac{1}{\int_0^\varepsilon \int_0^{2\pi} (J_n(\alpha_{mn} r))^2\cos^2(n\theta) rd\theta dr} = \dfrac{2}{\pi (1+\delta_{0, n})\int_0^\varepsilon  (J_n(\alpha_{mn} r))^2 r dr}, $$
$\beta_{mn} =\sqrt{(\alpha_{mn})^2-k^2}$, $c_{mn} =-\frac{1}{2\beta_{mn}}$, and $\beta_{mn}$ has positive real part.  Note that since $\frac{\partial g_0^1}{\partial z}\to 0$ as $z\to -\infty$, the  propagative eigenfunction $\phi_{1,0,\xi}(\m{r})$ shall be excluded and the eigenfunctions are summed over integers $m \geq 1$, $n\geq 0$ and $(m,n)\neq (1,0)$ in \eqref{g01}.  

\end{subsection}

\begin{subsection}{Decomposition of the Green's function for cylindrical cavity}

To prepare for the asymptotic analysis of the integral operators in following sections, we will introduce a useful decomposition of the Green's function $g^i_\varepsilon(k;\m{r},\m{r}')$ to separate the effects of the Green's function due to different components of the geometry. Comparing the geometries of $\Omega^\varepsilon$ and $\Omega_1^\varepsilon$, we deduce that the corresponding Green's function $g^i_\varepsilon(k;\m{r},\m{r}')$ of $\Omega^\varepsilon$ can be decomposed into three parts:
\begin{equation}\label{decomg}
 g^i_\varepsilon(k;\m{r},\m{r}')= g_0^1(k;\m{r},\m{r}')+g_0^p(k;\m{r},\m{r}')+g_0^2(k;\m{r},\m{r}'),
 \end{equation} 
   where $g_0^1(k;\m{r},\m{r}')$ is the Green's function of  $\Omega_1^\varepsilon$,  $g_0^p(k;\m{r},\m{r}')$  contains the propagative eigenfunction $\phi_{1,0,e}(\m{r})$ along the $z$ direction, and $g_0^2(k;\m{r},\m{r}')$ takes care of the homogeneous  Neumann boundary condition on $\Gamma_2$. The eigenfunction $\phi_{1,0,e}(\m{r})$ is excluded when deriving $g_0^1(k;\m{r},\m{r}')$ for $\Omega_1^\varepsilon$, while it shall not be neglected when constructing the Green's function $g^i_\varepsilon(k;\m{r},\m{r}')$ for the cavity $\Omega^\varepsilon$. We first present the formula of $g_0^p(k;\m{r},\m{r}')$ as 
 \begin{equation*}
g_0^p(k;\m{r},\m{r}') = c_{1,0}d_{1,0} \phi_{1,0,e}(\m{r})\phi_{1,0,e}(\m{r}') (e^{-\beta_{1,0}|z-2+z'|}+e^{-\beta_{1,0}|z-z'|}),
\end{equation*}
where we take $\beta_{1,0} = ik$, $c_{1,0} =-\frac{1}{2\beta_{1,0}}$ and $d_{1,0}= \frac{1}{\pi\varepsilon^{2}}$ to normalize this term. The third part  $g_0^2(k;\m{r},\m{r}')$ shall satisfy the following system of equations to ensure the homogeneous  Neumann boundary condition on $\Gamma_2$ of the summation \eqref{decomg}:
\begin{equation*}
\begin{cases}
\Delta g_0^2(k;\m{r},\m{r}') +k^2 g_0^2(k;\m{r},\m{r}') = 0,  \quad &\m{r},\m{r}'\in \Omega^\varepsilon,\\
\dfrac{\partial g_0^2}{\partial\nu_{\m{r}'}} = 0, \quad &\m{r}'\in \Gamma_1\cup\Gamma_r,\\
\dfrac{\partial g_0^2}{\partial\nu_{\m{r}'}} = -\dfrac{\partial g_0^1}{\partial\nu_{\m{r}'}} -\dfrac{\partial g_0^p}{\partial\nu_{\m{r}'}},  \quad &\m{r}'\in \Gamma_2.
\end{cases}
\end{equation*}
Using $w(k;\m{r},\m{r}')$ to denote $-\frac{\partial g_0^2}{\partial\nu_{\m{r}'}}|_{z'=0}= (\frac{\partial g_0^1}{\partial\nu_{\m{r}'}} +\frac{\partial g_0^p}{\partial\nu_{\m{r}'}} )|_{z'=0}$ and $E_{mn}(z)$ to denote $e^{-\beta_{mn} |z-2|}+e^{-\beta_{mn} |z|}$, then $w(k;\m{r},\m{r}') $ takes the following form:
\begin{equation*}
\begin{aligned}
w(k;\m{r},\m{r}') &=  -\sum_{m, n} \sum_{\xi \in\{o,e\}}c_{mn}d_{mn} \phi_{mn\xi}(\m{r})\phi_{mn\xi}(\m{r}') \beta_{mn}\left(e^{-\beta_{mn} |z-2|}+e^{-\beta_{mn} |z|}\right) \\
&:=-\sum_{m, n}\sum_{\xi \in\{o,e\}} c_{mn}d_{mn} \phi_{mn\xi}(\m{r})\phi_{mn\xi}(\m{r}')\beta_{mn}E_{mn}(z).
\end{aligned}
\end{equation*}
Applying the formula \eqref{cavityG} of the Green's function $g^i_\varepsilon(k; \tilde{\m{r}},\m{r}')$, we derive the integral representation of $g_0^2(k; \tilde{\m{r}},\m{r}')$ for $\tilde{\m{r}}, \m{r}'\in\Omega^\varepsilon$:
\begin{equation*}
\begin{aligned}
g_0^2(k; \tilde{\m{r}},\m{r}') &= \int_{\Gamma_2} g^i_\varepsilon(k;\m{r}',\m{r}'') w(k;\tilde{\m{r}}, \m{r}'') ds(\m{r}'') \\
&=-\int_{\Gamma_2}  \left(\sum_{m, n}\sum_{j=0}^\infty\sum_{\xi \in\{o,e\}} c_{mnj}d_{mnj} \phi_{mnj\xi}(\m{r}')\phi_{mnj\xi}(\m{r}'') \right)\\
&\quad \cdot\left(\sum_{m, n} \sum_{\xi \in\{o,e\}}c_{mn}d_{mn} \phi_{mn\xi}(\tilde{\m{r}})\phi_{mn\xi}(\m{r}'') \beta_{mn}E_{mn}(\tilde{z})\right)
 ds(\m{r}'')  \\
 &=- \sum_{m, n}\sum_{\xi \in\{o,e\}}\sum_{j=0}^\infty\left( c_{mnj}d_{mnj} \phi_{mnj\xi}(\m{r}')\right)\left( c_{mn}d_{mn} \phi_{mn\xi}(\tilde{\m{r}}) \beta_{mn}E_{mn}(\tilde{z})\right)\int_{\Gamma_2} (\phi_{mn\xi}(\m{r}''))^2  ds(\m{r}'')  \\
  &= \sum_{m, n}\sum_{j=0}^\infty \sum_{\xi \in\{o,e\}}\frac{1}{1+\delta_{0, j}}\left( \frac{  d_{mnj}E_{mn}(\tilde{z})}{k^2-(\alpha_{mn})^2-(\gamma_j)^2} \phi_{mnj\xi}(\m{r}')\right)\phi_{mn\xi}(\tilde{\m{r}}),
  \end{aligned}
  \end{equation*}
where we  have used $c_{mn}\beta_{mn} = -\frac{1}{2}$. Thus we are ready to present the expansion of the Green's function $ g^i_\varepsilon(k;\m{r},\m{r}')$ through the decomposition \eqref{decomg}.
 For $\m{r}, \m{r}'\in\Gamma_1$, $g^i_\varepsilon(k;\m{r},\m{r}')$ can be written as the summation of the following three terms:
\begin{align}
 g_0^1(k; \m{r},\m{r}') &=  2\sum_{(m,n)\neq (1,0)} \sum_{\xi \in\{o,e\}}c_{mn}d_{mn} \phi_{mn\xi}(\m{r})\phi_{mn\xi}(\m{r}'), \label{sameg1} \\
g^p(k; \m{r},\m{r}') &=  2 c_{1,0}d_{1,0} \phi_{1,0,e}(\m{r})\phi_{1,0,e}(\m{r}'), \label{samegp}   \\
 g_0^2(k; \m{r},\m{r}')&= \sum^\infty_{m, n}\sum_{\xi \in\{o,e\}}\sum_{j=0}^\infty \frac{1}{1+\delta_{0, j}} \frac{(-1)^j E_{mn}(1)d_{mn}}{k^2-(\alpha_{mn})^2-(\gamma_j)^2} \phi_{mn\xi}(\m{r}') \phi_{mn\xi}(\m{r})\nonumber \\
 & =\sum^\infty_{m, n}\sum_{\xi \in\{o,e\}}\sum_{j=0}^\infty \frac{2}{1+\delta_{0, j}} \frac{(-1)^j e^{-\beta_{mn}}d_{mn}}{k^2-(\alpha_{mn})^2-(\gamma_j)^2} \phi_{mn\xi}(\m{r}') \phi_{mn\xi}(\m{r}). \label{sameg2} 
 \end{align}

\end{subsection}
\begin{subsection}{Boundary-integral equations for the diffraction}\label{subint}

We are now ready to formulate the boundary-integral equations of the transmission across the aperture. Note that $\frac{\partial u^i}{\partial\nu}+\frac{\partial u^r}{\partial\nu}=0$ on $\{z= 1\}$, and then $\frac{\partial u_\varepsilon^s}{\partial\nu}=0$ on $\{z=1\}\backslash \Gamma_1$. Following the Green's identity, with the radiation condition in the far field, we obtain that the total pressure field $u_\varepsilon$ in the upper domain $\Omega^+$ satisfies:
\begin{equation*}
u_\varepsilon(\m{r}) = \int_{\Gamma_1} g^e(k;\m{r},\m{r}')\frac{\partial u_\varepsilon^s}{\partial\nu}(\m{r}')ds(\m{r}')+ u^i(\m{r})+u^r(\m{r}),\quad \m{r}\in\Omega^+,
\end{equation*} 
 where $\nu$ denotes the unit outward normal pointing to the
domain $ \Omega^+$. 
By the continuity of the single layer potential \cite{kress1989linear}, we further deduce:
\begin{equation}\label{up1}
u_\varepsilon(\m{r}) = \int_{\Gamma_1} g^e(k;\m{r},\m{r}')\frac{\partial u_\varepsilon^s}{\partial\nu}(\m{r}')ds(\m{r}')+ u^i(\m{r})+u^r(\m{r}),\quad \m{r}\in\Gamma_1.
\end{equation} 
Similarly, taking limit from the lower domain $\Omega^-$ leads to
\begin{equation}\label{low1}
u_\varepsilon(\m{r}) = \int_{\Gamma_2} g^e(k;\m{r},\m{r}')\frac{\partial u_\varepsilon^s}{\partial\nu}(\m{r}')ds(\m{r}'),\quad \m{r}\in\Gamma_2,
\end{equation} 
 where $\nu$ denotes the unit outward normal pointing to the
domain $ \Omega^-$.
Applying the Green's function  $g^i_\varepsilon(k;\m{r},\m{r}')$ and the boundary condition on $\partial \Omega^\varepsilon$, we derive the integral representation for the total field in $\Omega^\varepsilon$:
\begin{equation*}
u_\varepsilon(\m{r}) = -\int_{\Gamma_1\cup\Gamma_2} g^i_\varepsilon(k;\m{r},\m{r}')\frac{\partial u_\varepsilon^s}{\partial\nu}(\m{r}')ds(\m{r}'),\quad  \m{r}\in\Omega^\varepsilon.
\end{equation*} 
By the continuity of the single layer potential again, one has 
\begin{equation}\label{in1}
u_\varepsilon(\m{r}) = -\int_{\Gamma_1\cup\Gamma_2} g^i_\varepsilon(k;\m{r},\m{r}')\frac{\partial u_\varepsilon^s}{\partial\nu}(\m{r}')ds(\m{r}'),\quad  \m{r}\in\Gamma_1 \cup\Gamma_2.
\end{equation} 
%
Note that in these integral representations above, by the definition of $\nu$, 
\begin{equation*}
\left.\frac{\partial u_\varepsilon^s}{\partial\nu}\right\vert_{\Gamma_1} = \frac{\partial u_\varepsilon^s}{\partial z}(r,\theta,1), \quad\left. \frac{\partial u_\varepsilon^s}{\partial\nu}\right\vert_{\Gamma_2} = -\frac{\partial u_\varepsilon^s}{\partial z}(r,\theta,0).
\end{equation*}
Therefore, by imposing the continuity of the solution along  $\Gamma_1$ and $\Gamma_2$, we obtain the following system of boundary-integral equations for $\frac{\partial u_\varepsilon^s}{\partial\nu}|_{\Gamma_1}$ and $ \frac{\partial u_\varepsilon^s}{\partial\nu}|_{\Gamma_2}$  with representations \eqref{up1}, \eqref{low1}, and \eqref{in1}:
\begin{eqnarray}
\int_{\Gamma_1}g^e(k;\m{r},\m{r}')\frac{\partial u_\varepsilon^s}{\partial\nu}(\m{r}')ds(\m{r}')+ \int_{\Gamma_1\cup\Gamma_2}g^i_\varepsilon(k;\m{r},\m{r}')\frac{\partial u_\varepsilon^s}{\partial\nu}(\m{r}')ds(\m{r}')+  u^i(\m{r})+u^r(\m{r}) =0,
\quad &\m{r} \in \Gamma_1, \label{upper}\\
\int_{\Gamma_2}g^e(k;\m{r},\m{r}')\frac{\partial u_\varepsilon^s}{\partial\nu}(\m{r}')ds(\m{r}')+ \int_{\Gamma_1\cup\Gamma_2}g^i_\varepsilon(k;\m{r},\m{r}')\frac{\partial u_\varepsilon^s}{\partial\nu}(\m{r}')ds(\m{r}')=0, \quad&
\m{r}\in \Gamma_2\label{lower}.
\end{eqnarray}
\end{subsection}
\end{section}
\begin{section}{Asymptotic expansions of boundary-integral equations}\label{asym}
This section is devoted to the asymptotic expansions of boundary-integral equations \eqref{upper}--\eqref{lower} for the acoustic diffraction. 
To prepare for the asymptotic analysis, we shall rescale the cylindrical  coordinate system by introducing $\m{X} = (R,\theta) =(\frac{r}{\varepsilon}, \theta) \in D(0,1)$ for $\m{r}=(r,\theta, z)\in\Omega^\varepsilon$, where $D(0,1)$ is the unit disk in $\mathbb{R}^2$ centered at the origin, and then study the scaled operators defined on $D(0,1)$.
To facilitate the analysis of integral operators on $D(0,1)$, we first introduce the conventional fractional Sobolev space $H^{\frac{1}{2}}(\mathbb{R}^2)$ with the norm
\begin{equation*}
\Vert u\Vert_{H^{\frac{1}{2}}(\mathbb{R}^2)} = \int_{\mathbb{R}^2} (1+|\omega|^2)^{1/2}|\hat{u}(\omega)|^2d\omega,
\end{equation*}
 where $\hat{u}$ is the Fourier transform of $u$.
 Then we introduce two related fractional order spaces  $$V_1 = \tilde{H}^{-\frac{1}{2}}(D(0,1)), \quad V_2=H^{\frac{1}{2}}(D(0,1)),$$
 where $H^{\frac{1}{2}}(D(0,1))$ is the Hilbert space
 \begin{equation*}
H^{\frac{1}{2}}(D(0,1)):= \{u = U|_{D(0,1)}: U\in H^{\frac{1}{2}}(\mathbb{R}^2)\}
 \end{equation*}
  with the norm
  \begin{equation*}
\Vert u\Vert^2_{H^{\frac{1}{2}}(D(0,1))} = \inf \{ \Vert U\Vert_{H^{\frac{1}{2}}(\mathbb{R}^2)}: U\in H^{\frac{1}{2}}(\mathbb{R}^2) \text{ and } U|_{D(0,1)}=u\},
\end{equation*}
and $V_1$ is the dual of $V_2$.  We refer
to \cite{adams2003sobolev} for further information on the fractional Sobolev spaces. Through out this work,  the duality between $V_1$ and $V_2$ will be denoted as $\langle u,v \rangle$ for $u\in V_1$, $v\in V_2$. To simplify the notations, we introduce the expression $x \lesssim y$, which means $x \leq C y$ for
some generic constant $C$. If $x \lesssim y$ and $y \lesssim x$ hold simultaneously, then we write $x \sim y$.

\begin{subsection}{Asymptotic expansions of integral operators}
Let us introduce the following useful notations for the scaled quantities of $\m{X},\m{Y}\in D(0,1)$:
\begin{equation*}
\begin{aligned}
\Psi_1 (\m{X}) &:= -\frac{\partial u_\varepsilon^s}{\partial\nu}(\varepsilon\m{X}, 1),\\
\Psi_2 (\m{X}) &:= - \frac{\partial u_\varepsilon^s}{\partial\nu}(\varepsilon\m{X}, 0),\\
f(\m{X})&:=  (u^i+u^r)(\varepsilon\m{X}, 1),\\
G_\varepsilon^e(k;\m{X},\m{Y}) &:= 2\phi(k;|\m{X}-\m{Y}|\varepsilon),\\
G_\varepsilon^i(k;\m{X},\m{Y}) &:=g^i_\varepsilon(k;(\varepsilon \m{X},1),(\varepsilon \m{Y},1) )= g^i_\varepsilon(k;(\varepsilon \m{X},0),(\varepsilon \m{Y},0) ) ,\\
\tilde{G}_\varepsilon^i(k;\m{X},\m{Y}) &:=g^i_\varepsilon(k;(\varepsilon \m{X},1),(\varepsilon \m{Y},0) )= g^i_\varepsilon(k;(\varepsilon \m{X},0),(\varepsilon \m{Y},1)),
\end{aligned}
\end{equation*}
Applying the notations above, we define the following scaled integral operators of $\psi\in V_1$ :
\begin{eqnarray*}
T^e \psi(\m{X}) = \int_{D(0,1)} G_\varepsilon^e(k;\m{X},\m{Y}) \psi(\m{Y})d\m{Y},\quad \m{X}\in D(0,1),\\
T^i \psi (\m{X}) =  \int_{D(0,1)} G_\varepsilon^i(k;\m{X},\m{Y}) \psi(\m{Y})d\m{Y},\quad \m{X}\in D(0,1),\\
\tilde{T}^i \psi (\m{X}) =  \int_{D(0,1)} \tilde{G}_\varepsilon^i(k;\m{X},\m{Y}) \psi(\m{Y})d\m{Y},\quad \m{X}\in D(0,1).
\end{eqnarray*}
By the change of variable, we deduce the following proposition:
\begin{proposition*}
The system of boundary-integral equations \eqref{upper}--\eqref{lower} is equivalent to 
\begin{equation}
\begin{bmatrix}
T^e+ T^i& \tilde{T}^i\\
\tilde{T}^i& T^e+ T^i
\end{bmatrix}
\begin{bmatrix}
\Psi_1\\
\Psi_2
\end{bmatrix}
=\begin{bmatrix}
 \varepsilon^{-2} f\\
0
\end{bmatrix}.\label{initialmatrix}
\end{equation}
\end{proposition*}

Next we will carry out the asymptotic expansions of the integral operators in \eqref{initialmatrix}.  For clarity, we introduce the notations below for the asymptotic expansions:
\begin{align}
 \beta_1(k,\varepsilon) &:= -\frac{ik}{2\pi},\label{notbeta1}\\
  \beta_2(k,\varepsilon) &:= \frac{\text{cot}\,k}{k}d_{1,0},\label{notbeta2}\\
\beta(k,\varepsilon) &:= \beta_1(k,\varepsilon)+\beta_2(k,\varepsilon)= -\frac{ik}{2\pi}+\frac{\text{cot}\, k}{k}d_{1,0},\label{notbeta}\\
   \tilde{\beta}(k,\varepsilon) &:= \frac{ d_{1,0} }{k\sin k},\label{nottbeta}\\
   r^i(k;\m{X},\m{Y}) &:= g_0^1(k;(\varepsilon\m{X},1),(\varepsilon\m{Y},1)),\nonumber\\
   \kappa(k;\m{X},\m{Y}) &:=  -\frac{1}{2\pi |\m{X}-\m{Y}|\varepsilon} + r^i(k;\m{X},\m{Y}).\nonumber
\end{align}
Since the basis functions $\{\phi_{mn\xi}(\m{r})\}$ are invariant along the $z$ axis by definition, we suppress the dependence on $z$ and denote $\phi_{mn\xi}(\m{r})$ by  $\phi_{mn\xi}(\varepsilon \m{X})$  for $\m{r} = (\varepsilon\m{X},z)$ in the remaining.
  Now we present the asymptotic expansions of the kernels $G_\varepsilon^e(k;\m{X},\m{Y})$, $G_\varepsilon^i(k;\m{X},\m{Y})$ and $\tilde{G}_\varepsilon^i(k;\m{X},\m{Y})$ for the integral operators defined on $V_1$.
\begin{lemma}\label{LM1}
If $|k\varepsilon| \ll 1$, we have the following expansions:
\begin{align*}
G_\varepsilon^e(k;\m{X},\m{Y})&= \beta_1(k,\varepsilon)- \frac{1}{2\pi |\m{X}-\m{Y}|\varepsilon} +r^\varepsilon_1(\m{X},\m{Y}),\\
G_\varepsilon^i(k;\m{X},\m{Y})&=\beta_2(k,\varepsilon)+ r^i(k;\m{X},\m{Y})+ r^\varepsilon_2(\m{X},\m{Y}),\\
\tilde{G}_\varepsilon^i(k;\m{X},\m{Y})&=\tilde{\beta}(k,\varepsilon)+\tilde{\kappa}_\infty(\m{X},\m{Y}),
\end{align*}
where $r_1^\varepsilon(\m{X},\m{Y})$, $r_2^\varepsilon(\m{X},\m{Y})$ and $\tilde{\kappa}_\infty(\m{X},\m{Y})$ are bounded functions with $r_1^\varepsilon\sim O(k^2 \varepsilon)$, $r_2^\varepsilon\sim O( \varepsilon^{-1}\exp(-\varepsilon^{-1}))$, and $\tilde{\kappa}_\infty\sim O(\varepsilon^{-1}\exp(-\varepsilon^{-1}))$.
\end{lemma}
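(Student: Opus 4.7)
The plan is to handle the three kernels in order of increasing complexity, exploiting the common strategy of separating the propagating $(m,n)=(1,0)$ contribution, which produces the order-$O(1)$ constants $\beta_1$, $\beta_2$, $\tilde{\beta}$, from the non-propagating tail, which decays exponentially in $1/\varepsilon$. For $G_\varepsilon^e$, the argument will be a direct Taylor expansion: substituting the free-space Green's function into the definition produces $-e^{ik\varepsilon|\m{X}-\m{Y}|}/\lt(2\pi\varepsilon|\m{X}-\m{Y}|\rt)$, and expanding $e^{ik\varepsilon|\m{X}-\m{Y}|} = 1 + ik\varepsilon|\m{X}-\m{Y}| + O\lt((k\varepsilon)^2\rt)$ immediately yields the Coulomb singularity plus the constant $\beta_1 = -ik/(2\pi)$ together with an $O(k^2\varepsilon)$ remainder, uniformly on the unit disk.

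For $G_\varepsilon^i$ I would invoke the decomposition $g^i = g_0^1 + g_0^p + g_0^2$ already established in Section~\ref{inteq}. The piece $g_0^1$ evaluated on $\Gamma_1\times\Gamma_1$ and rescaled is, by definition, exactly $r_i(\m{X},\m{Y})$, so the task reduces to extracting $\beta_2$ from the $(1,0)$-mode of $g_0^p + g_0^2$ and bounding the rest. Using $c_{1,0}=i/(2k)$, $\phi_{1,0,e}\equiv 1$ and $E_{1,0}(1)=2$ one computes $g_0^p|_{\Gamma_1\times\Gamma_1} = id_{1,0}/k$, and the $j$-sum in \eqref{sameg2} for $(m,n)=(1,0)$ collapses via the cotangent partial-fraction identity
\begin{equation*}
\frac{1}{k\sin k} \;=\; \frac{1}{k^2} + 2\sum_{j=1}^\infty \frac{(-1)^j}{k^2-(j\pi)^2}
\end{equation*}
to $e^{-ik}d_{1,0}/(k\sin k)$; combining the two via $i\sin k + e^{-ik} = \cos k$ produces exactly $\beta_2 = d_{1,0}\cot k/k$. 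For the remainder $r_2^\varepsilon$, which collects the $(m,n)\neq(1,0)$ modes of $g_0^2$, summing over $j$ by the hyperbolic analogue $(x\sinh x)^{-1} = x^{-2} + 2\sum_{n\geq 1}(-1)^n/(x^2+n^2\pi^2)$ produces the factor $-e^{-\beta_{mn}}/(\beta_{mn}\sinh\beta_{mn}) \sim 2 e^{-2\beta_{mn}}/\beta_{mn}$; since $\beta_{mn}\sim q_{mn}/\varepsilon$ and $d_{mn}\sim\varepsilon^{-2}$, each mode contributes $O(\varepsilon^{-1}\exp(-2q_{mn}/\varepsilon))$, and convergence of the series, using $|J_n(q_{mn}X)|\leq 1$ on $D(0,1)$, yields $r_2^\varepsilon = O\lt(e^{-1/\varepsilon}/\varepsilon\rt)$.

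For $\tilde{G}_\varepsilon^i$ one may work directly with the cavity eigenfunction expansion \eqref{cavityG}: at $(z,z')=(1,0)$ the factor $\cos(\gamma_j z)\cos(\gamma_j z')$ reduces to $(-1)^j$, so each two-dimensional eigenmode produces the same type of $j$-sum as before. The $(1,0)$-mode, with $d_{1,0,j}=2d_{1,0}/(1+\delta_{0,j})$ and the cotangent identity, evaluates to $d_{1,0}/(k\sin k) = \tilde{\beta}$, while for $(m,n)\neq(1,0)$ each mode gives $-d_{mn}/(\beta_{mn}\sinh\beta_{mn})$, exponentially small in $1/\varepsilon$; summation therefore produces $\tilde{\kappa}_\infty = O\lt(e^{-1/\varepsilon}/\varepsilon\rt)$. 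The main obstacle will be not any single expansion but the uniform control of the remainder series over $(m,n)$: one must balance the geometric decay $d_{mn}/\beta_{mn} = O(\varepsilon^{-1})$ and $e^{-\beta_{mn}} = O(e^{-q_{mn}/\varepsilon})$ against the oscillatory Bessel basis, using that the cutoffs $q_{mn}$ grow at least linearly in $m$ for each $n$. It is also tacitly assumed that $k$ stays away from the Fabry-Perot poles $j\pi$ of $\cot k$ and from the waveguide cutoffs $\alpha_{mn}$, so that the partial-fraction identities are applied in their classical, non-singular sense.
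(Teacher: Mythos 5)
Your proposal follows essentially the same route as the paper's own proof: a direct Taylor expansion of the free-space kernel for $G_\varepsilon^e$, the decomposition $g^i=g_0^1+g_0^p+g_0^2$ combined with the cotangent and hyperbolic partial-fraction identities to extract $\beta_2$ and bound the exponentially small tail for $G_\varepsilon^i$, and the cavity eigenfunction expansion at $(z,z')=(1,0)$ with the $(-1)^j$ factor for $\tilde{G}_\varepsilon^i$. The computations (e.g. $i\sin k+e^{-ik}=\cos k$ yielding $\beta_2=d_{1,0}\cot k/k$) match the paper's, and your added remarks on uniform summability and on $k$ staying away from the poles of $1/\sin k$ are consistent with the restrictions the paper imposes later.
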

\begin{proof}
First we consider the expansion of $G_\varepsilon^e(k;\m{X},\m{Y})$. Direct Taylor's expansion leads to 
\begin{equation*}
\begin{aligned}
G_\varepsilon^e(k;\m{X},\m{Y}) &= -\frac{e^{ik\varepsilon|\m{X}-
\m{Y}|}}{2\pi|\m{X}-\m{Y}|\varepsilon}  \\
&= -\frac{i k}{2\pi} - \frac{1}{2\pi|\m{X}-\m{Y}|\varepsilon} +\frac{k^2 |\m{X}-\m{Y}|\varepsilon}{2\pi}  +o(k^2\varepsilon |\m{X}-\m{Y}|)\\
&:=-\frac{i k}{2\pi} - \frac{1}{2\pi |\m{X}-\m{Y}|\varepsilon} +r^\varepsilon_1(\m{X},\m{Y}).
\end{aligned}
\end{equation*}
 Then we consider the expansion of the function $G_\varepsilon^i(k;\m{X},\m{Y})$ applying the representation \eqref{sameg1}--\eqref{sameg2}:
\begin{align}
G_\varepsilon^i(k;\m{X},\m{Y}) &= g_0^1(k;(\varepsilon\m{X},1),(\varepsilon\m{Y},1))+g^p(k;(\varepsilon\m{X},1),(\varepsilon\m{Y},1))+g_0^2(k;(\varepsilon\m{X},1),(\varepsilon\m{Y},1))  \nonumber\\
&=  2\sum_{m, n}\sum_{\xi \in\{o,e\}} c_{mn}d_{mn} \phi_{mn\xi}(\varepsilon\m{X})\phi_{mn\xi}(\varepsilon\m{Y}) \nonumber\\
&+\sum_{\xi \in\{o,e\}}\sum_{j=0}^\infty  \frac{2}{1+\delta_{0, j}}\frac{(-1)^j e^{-\beta_{1,0}} d_{1,0}}{k^2-(\gamma_j)^2} \phi_{1,0,\xi}(\varepsilon\m{Y}) \phi_{1,0,\xi}(\varepsilon\m{X})\nonumber  \\
&+\frac{1}{2}\sum_{(m, n)\neq(1,0)}\sum_{j=0}^\infty\sum_{\xi \in\{o,e\}} \frac{2}{1+\delta_{0, j}} \frac{2(-1)^j e^{-\beta_{mn}} d_{mn}}{k^2-(\alpha_{mn})^2-(\gamma_j)^2} \phi_{mn\xi}(\varepsilon\m{Y}) \phi_{mn\xi}(\varepsilon\m{X})\nonumber \\
&= 2\sum_{(m, n)\neq (1,0)}\sum_{\xi \in\{o,e\}} c_{mn} d_{mn} \phi_{mn\xi}(\varepsilon\m{X})\phi_{mn\xi}(\varepsilon\m{Y})\nonumber \\
&+2c_{1,0}d_{1,0} \phi_{1,0,e}(\varepsilon\m{X})\phi_{1,0,e}(\varepsilon\m{Y})+ \frac{e^{-\beta_{1,0}} }{k \sin k}d_{1,0}\nonumber\\
&+\sum^\infty_{(m, n)\neq (1,0)}\sum_{\xi \in\{o,e\}}\frac{e^{-\beta_{mn}} d_{mn}\phi_{mn\xi}(\varepsilon\m{Y}) \phi_{mn\xi}(\varepsilon\m{X})}{\sqrt{-k^2+(\alpha_{mn})^2}\sinh(\sqrt{-k^2+(\alpha_{mn})^2})},\label{Gini}
\end{align}
and the  formula \eqref{Gini} can be further simplified as 
\begin{align}
G_\varepsilon^i(k;\m{X},\m{Y})&= 2\sum_{(m, n)\neq (1,0)}\sum_{\xi \in\{o,e\}}c_{mn} d_{mn} \phi_{mn\xi}(\varepsilon\m{X})\phi_{mn\xi}(\varepsilon\m{Y}) + \frac{\text{cot}\,  k  }{k }d_{1,0}\nonumber\\
&+\sum^\infty_{(m, n)\neq (1,0)}\sum_{\xi \in\{o,e\}}\frac{e^{-\beta_{mn}} d_{mn}\phi_{mn\xi}(\varepsilon\m{Y}) \phi_{mn\xi}(\varepsilon\m{X})}{\sqrt{-k^2+(\alpha_{mn})^2}\sinh(\sqrt{-k^2+(\alpha_{mn})^2})},
\end{align}
where we have used  $\phi_{1,0,e}(\varepsilon\m{X}) = J_0(0) = 1$,  $c_{1,0} = -\frac{1}{2\beta_{1,0}}$,  and the representations of elementary functions by series (cf. \cite{gradshteyn2014table}):
\begin{align*}
\sum_{j=1}^\infty \frac{2(-1)^j}{k^2-(j\pi)^2}+\frac{1}{k^2} &= \frac{1}{k \sin k},\\
 \sum_{j=1}^\infty\frac{2(-1)^j}{k^2-(\alpha_{mn})^2-(j\pi)^2}+\frac{1}{k^2-(\alpha_{mn})^2}
&=\frac{1}{\sqrt{-k^2+(\alpha_{mn})^2}\sinh(\sqrt{-k^2+(\alpha_{mn})^2})}.
\end{align*}
Recalling $\alpha_{mn} = \frac{q_{mn}}{\varepsilon}$, we obtain that the expansion of $G_\varepsilon^i(k;\m{X},\m{Y})$ is of the following form
\begin{equation*}
G_\varepsilon^i(k;\m{X},\m{Y})= \beta_2(k,\varepsilon)+ r^i(k;\m{X},\m{Y})+ r^\varepsilon_2(\m{X},\m{Y}),
\end{equation*}
where 
\begin{equation*}
\beta_2(k,\varepsilon) = \frac{\text{cot}\,k}{k}d_{1,0},
\end{equation*}
the second term 
\begin{equation*}
r^i(k;\m{X},\m{Y}) =g_0^1(k;(\varepsilon\m{X},1),(\varepsilon\m{Y},1)),
\end{equation*}
and the last term
\begin{equation*}
r_2^\varepsilon(\m{X},\m{Y}) = \sum^\infty_{(m, n)\neq (1,0)}\sum_{\xi \in\{o,e\}}\left(\frac{1}{\sqrt{-k^2+(\frac{q_{mn}}{\varepsilon})^2}\sinh(\sqrt{-k^2+(\frac{q_{mn}}{\varepsilon})^2})}\right)e^{-\beta_{mn}} d_{mn}\phi_{mn\xi}(\varepsilon\m{Y}) \phi_{mn\xi}(\varepsilon\m{X}).
\end{equation*}
We further deduce that $r_2^\varepsilon(\m{X},\m{Y})$ is of order $O( \varepsilon^{-1}\exp(-\varepsilon^{-1}))$ from $d_{mn} = O(\varepsilon^{-2})$ and $\beta_{mn} = \sqrt{(\frac{q_{mn}}{\varepsilon})^2-k^2} =O(\varepsilon^{-1})$.

Now we consider the expansion of the kernel $\tilde{G}_\varepsilon^i(k;\m{X},\m{Y})$. We use the formula \eqref{cavityG} of $g^i_\varepsilon(k;\m{r},\m{r}')$ directly:
 for $\m{r}\in \Gamma_1$ and $\m{r}'\in\Gamma_2$, we have the expansion representation 
 \begin{equation}
\tilde{G}_\varepsilon^i(k;\m{X},\m{Y}) =\sum_{m, n}\sum_{\xi \in\{o,e\}} \sum_{j=1}^\infty\left((-1)^j \frac{2c_{mnj}}{1+\delta_{0, j}} \right) d_{mn}\phi_{mn\xi}(\varepsilon\m{X})\phi_{mn\xi}(\varepsilon\m{Y}).\label{Gikernel}
\end{equation}
Again it follows from the representation of elementary functions as series that
\begin{equation*}
 \sum_j(-1)^j \frac{2c_{1,0,j}}{1+\delta_{0, j}}  = \frac{1}{k\sin k},
 \end{equation*}
 and for $(m,n)\neq (1,0)$,
 \begin{align*}
 \sum_j(-1)^j \frac{2c_{mnj}}{1+\delta_{0, j}}&=\frac{1}{\sqrt{-k^2+(\frac{q_{mn}}{\varepsilon})^2}\sinh(\sqrt{-k^2+(\frac{q_{mn}}{\varepsilon})^2})}\\
&=O\left(\frac{\varepsilon}{q_{mn}}\exp\left(-\frac{q_{mn}}{\varepsilon}\right)\right).
\end{align*}
One can now deduce from \eqref{Gikernel} that 
\begin{align}
\tilde{G}_\varepsilon^i(k;\m{X},\m{Y}) &=\frac{d_{1,0}}{k\sin k}+ \sum_{m, n} \sum_{\xi \in\{o,e\}}d_{mn}\phi_{mn\xi}(\varepsilon\m{X})\phi_{mn\xi}(\varepsilon\m{Y}) O\left(\frac{\varepsilon}{q_{mn}}\exp\left(-\frac{q_{mn}}{\varepsilon}\right)\right)  \nonumber \\
&:=\tilde{\beta}(k,\varepsilon)+\tilde{\kappa}_\infty(\m{X},\m{Y}),\label{gi2i}
\end{align}
where $\tilde{\kappa}_\infty(\m{X},\m{Y})$ is of order $ O(\varepsilon^{-1}\exp(-\varepsilon^{-1}))$. Then we obtain all desired expansions.
 \end{proof}
 We denote by  $K$, $K_\infty$, $\tilde{K}_\infty$ the integral operators corresponding to the Schwarz integral kernels $\kappa$, $\kappa_\infty: = r_1^\varepsilon+r_2^\varepsilon$, $\tilde{\kappa}_\infty $ respectively, where $r_1^\varepsilon$, $r_2^\varepsilon$ and $\tilde{\kappa}_\infty$ are defined in Lemma~\ref{LM1}. We further introduce an operator $P: V_1\rightarrow V_2,$ 
  $$ P\psi(\m{X})=\langle\psi,1\rangle1,$$ 
where $1\in V_2$ is the constant function equal to 1 on $D(0,1)$.
With these expansions of kernels in Lemma~\ref{LM1}, we can derive the following useful lemma for the integral operators in the equation \eqref{initialmatrix}:
\begin{lemma}\label{decomp}
\begin{enumerate}
\item The operator  $T^e+ T^i$ admits the following decomposition
$$T^e+ T^i = \beta(k,\varepsilon) P+K+K_\infty.$$
Moreover, $K_\infty$ is bounded from $V_1$ to $V_2$ with the operator norm $\Vert K_\infty\Vert \leq O(k^2 \varepsilon)$ uniformly for bounded $k$.
\item  The operator $\tilde{T}^i$ admits the following decomposition
$$T^e+ T^i = \tilde{\beta}(k,\varepsilon) P+\tilde{K}_\infty.$$
Moreover, $\tilde{K}_\infty$ is bounded  from $V_1$ to $V_2$ with the operator norm $\Vert \tilde{K}_\infty\Vert \leq O(\varepsilon^{-1} \exp(-\varepsilon^{-1}))$ uniformly for bounded $k$.
\item The operator $\tilde{K} = \varepsilon K$ is bounded from $V_1$ to $V_2$ with a bounded inverse. Moreover, $$\alpha := \langle\tilde{K}^{-1}1,1\rangle\neq 0.$$
\end{enumerate}
\end{lemma}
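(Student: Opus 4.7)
Parts (1) and (2) are essentially a rewriting of Lemma \ref{LM1}. Substituting the kernel expansions from that lemma into the definitions of $T^e$, $T^i$, and $\tilde T^i$ produces the identified pieces: the constant leading terms $\beta$ and $\tilde\beta$ generate the rank-one operator $P$ (from the definition $P\psi = \langle \psi, 1\rangle 1$), the kernel $\kappa$ generates $K$, and $\kappa_\infty := r_1^\varepsilon + r_2^\varepsilon$, $\tilde\kappa_\infty$ generate $K_\infty$ and $\tilde K_\infty$. The operator norm bounds follow from the corresponding pointwise kernel bounds in Lemma \ref{LM1} combined with a Schur (or Hilbert--Schmidt) estimate, using the continuous embeddings $L^2(D(0,1))\hookrightarrow V_1$ and $V_2\hookrightarrow L^2(D(0,1))$.

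\textbf{Part (3): bounded inverse.} I would decompose $\tilde K = \varepsilon K = K_0 + R$, where $K_0$ has kernel $-\frac{1}{2\pi|X-Y|}$ and $R$ has kernel $\varepsilon r_i(X,Y)$. The operator $K_0$ is, up to the factor $2$, the trace on the unit disk of the 3D Newtonian single-layer potential with density supported on the flat screen $D(0,1)\subset\{z=0\}$. By the classical screen single-layer potential theory on the spaces $V_1$ and $V_2$ (see \cite{ammari2009layer}), $K_0:V_1\to V_2$ is a topological isomorphism and $-K_0$ is strictly coercive, via the variational identity
$$
\langle -K_0\psi,\psi\rangle \;=\; 2\int_{\mathbb R^3}|\nabla u_\psi|^2\,dx \;\geq\; c\,\|\psi\|_{V_1}^2,
$$
where $u_\psi$ is the 3D Newton potential of $\psi\,\delta_{D(0,1)}$. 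From the eigenfunction expansion \eqref{sameg1} one reads off
$$
R(X,Y) \;=\; -\sum_{(m,n)\neq(1,0)}\sum_{\xi\in\{o,e\}} \frac{\varepsilon\,d_{mn}}{\beta_{mn}}\,\tilde\phi_{mn\xi}(X)\,\tilde\phi_{mn\xi}(Y),
$$
where $\tilde\phi_{mn\xi}(X):=\phi_{mn\xi}(\varepsilon X)=J_n(q_{mn}|X|)\cos/\sin(n\theta)$ are the unit-disk Neumann eigenfunctions (independent of $\varepsilon$), $d_{mn}>0$, and $\mathrm{Re}\,\beta_{mn}>0$ for $|k\varepsilon|\ll 1$. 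Hence $-R$ is, to leading order in $|k\varepsilon|$, a positive semidefinite bounded operator (a weighted sum of rank-one projectors with positive weights). Consequently $-\tilde K = -K_0 - R$ is strictly coercive on $V_1$, and Lax--Milgram yields a bounded inverse $\tilde K^{-1}: V_2 \to V_1$. For the non-vanishing of $\alpha$, since $-\tilde K$ is self-adjoint and strictly positive (again to leading order in $|k\varepsilon|$), so is $-\tilde K^{-1}$, and $1\not\equiv 0$ in $V_2$; therefore
$$
-\alpha \;=\; \langle -\tilde K^{-1}\,1,\,1\rangle \;>\; 0,
$$
so $\alpha<0$, in particular $\alpha\neq 0$. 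The conclusion persists for $k$ slightly off the real axis by continuity of the inverse in the operator norm.

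\textbf{Main obstacle.} The principal technical difficulty lies in the coercivity estimate for $-K_0$ on the correct fractional Sobolev scale. Because $D(0,1)$ is an open surface (a screen with boundary), $K_0$ is \emph{not} an isomorphism between the standard $H^{-1/2}(\partial\Omega)\to H^{1/2}(\partial\Omega)$ spaces attached to a closed boundary; one must pair $V_1$ (the $H^{-1/2}$ distributions on $\mathbb R^2$ supported in $\overline{D(0,1)}$) dually with $V_2=H^{1/2}(D(0,1))$, and exploit the extension-by-zero trick to turn coercivity into the Dirichlet-energy bound above. A secondary, easier point is verifying that the spectral series defining $R$ converges in the operator norm $V_1\to V_2$ uniformly in small $\varepsilon$, which follows from $1/\beta_{mn}=O(\varepsilon/q_{mn})$ together with the Weyl-type growth of the Neumann eigenvalues $q_{mn}$.
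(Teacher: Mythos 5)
Your proof is correct, but for part (3) it takes a genuinely different route from the paper. The paper identifies the leading-order kernel $\tfrac{1}{2\pi|\m{X}-\m{Y}|}+\tilde{G}_0^1$ of $\tilde{K}$ as the jump operator of a transmission problem posed on the coupled domain $\Omega_1\cup\Omega^+$ (Laplace equation with prescribed jump $f$ across the aperture $\tilde{\Gamma}$); invertibility is then inherited from the well-posedness of that problem (cited as analogous to Bonnetier--Triki), and $\alpha\neq 0$ is obtained by contradiction via a Green's-identity/energy argument showing that $\langle(\tilde{K}_0)^{-1}1,1\rangle=0$ would force $u\equiv 0$. You instead argue directly at the level of quadratic forms: you split $\tilde{K}=K_0+R$, invoke the classical coercivity of the screen single-layer operator $-K_0$ on the $\tilde{H}^{-1/2}$/$H^{1/2}$ duality, and observe from \eqref{sameg1} that $-R$ is a positive combination of rank-one projectors (since $c_{mn}=-1/(2\beta_{mn})$ with $\operatorname{Re}\beta_{mn}>0$), so $-\tilde{K}$ is coercive and Lax--Milgram applies. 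This buys you something the paper does not state: a definite sign $\alpha<0$, rather than mere non-vanishing, and it avoids appealing to the well-posedness of the auxiliary transmission problem \eqref{k0eq1} as a black box --- the PDE content is absorbed into the single identity $\langle-K_0\psi,\psi\rangle=2\int|\nabla u_\psi|^2$. The price is that you must justify the screen coercivity on $V_1$ and the operator-norm convergence of the spectral series for $R$ uniformly in $\varepsilon$; you correctly flag both, and the latter follows from $\varepsilon d_{mn}/\beta_{mn}=D_{mn}/\sqrt{q_{mn}^2-\varepsilon^2k^2}=O(1/q_{mn})$ as you indicate. One small caveat: positivity of $-R$ is exact only for real $k$ with $|k\varepsilon|\ll1$; your closing remark on perturbing to complex $k$ by norm-continuity of the inverse is the right fix and is needed later when $k$ sits at a resonance with $\operatorname{Im}k=O(\varepsilon^2)$. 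Parts (1) and (2) are handled exactly as in the paper, as immediate corollaries of Lemma~\ref{LM1}.
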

The first two results in Lemma~\ref{decomp} directly follow the definitions of $T^e$, $T^i$, $\tilde{T}^i$ and the expansions of their kernels in Lemma~\ref{LM1}. The proof of the third assertion will be presented in the next subsection.

\end{subsection}
\begin{subsection}{Invertibility of operator $\tilde{K}$}
In this subsection, we prove that the integral operator $\tilde{K}$ defined in Lemma~\ref{decomp} has a bounded inverse, and  $\alpha: =  \langle\tilde{K}^{-1}1,1\rangle\neq 0$.
We will carry out the asymptotic expansion of $\tilde{K}$ over $k\varepsilon$, denote the leading-order term of  $\tilde{K}$ as $\tilde{K}_0$, and analyze the invertibility of the integral operator $\tilde{K}_0$. Then we will obtain the conclusions for $\tilde{K}$.

One way to derive the leading-order term $\tilde{K}_0$ is carrying out the asymptotic expansion of the underlying partial differential equations. Recall that the kernel of $\tilde{K}$ is
\begin{equation*}
\varepsilon\kappa(k;\m{X},\m{Y}) =-\frac{1}{2\pi |\m{X}-\m{Y}|} +\varepsilon g_0^1(k;(\varepsilon\m{X},1),(\varepsilon\m{Y},1)),\end{equation*}
where the Green's function $g_0^1(k;\m{r},\m{r}')$ satisfies \eqref{waveguide}
with decaying boundary condition when $z\to -\infty$. 
Then we introduce an auxiliary function $G_0^1(k;\m{R},\m{R}')$ which satisfies
\begin{equation}
\begin{cases}
\Delta  G_0^1(k;\m{R},\m{R}')+\varepsilon^2 k^2 G_0^1(k;\m{R},\m{R}')= \delta(\m{R}-\m{R}'),\quad &\m{R},\m{R}'\in \Omega_1, \label{tg1}\\
\dfrac{\partial G_0^1 }{\partial\nu_{\m{R}'}} = 0,\quad &\m{R}'\in \ \partial\Omega_1,
\end{cases}
\end{equation}
and $\frac{\partial G_0^1}{\partial z}\to 0$ as $z\to -\infty$, where $\Omega_1$ denotes a semi-infinite waveguide 
$$\Omega_1 : = \{ \m{R}=(R,\theta, z):  0< R< 1,  -\pi< \theta\leq \pi, z< 1\}.$$
By the definitions of $g_0^1(k;\m{r},\m{r}')$ and the change of variable, there holds
$$G_0^1(k;(\m{X},1),(\m{Y},1))=\varepsilon g_0^1(k;(\varepsilon\m{X},1),(\varepsilon\m{Y},1)).$$     
On the other hand, one can observe from \eqref{tg1} that the function $G_0^1(k;\m{R},\m{R}')$ is actually the Green's function for the Helmholtz equation in the structure $\Omega_1$ with wavenumber equal to $k\varepsilon$ and the homogeneous Neumann boundary condition. Let us do asymptotic expansion on both sides of \eqref{tg1} as $k \varepsilon\ll 1$.
Using $\tilde{G}_0^1(\m{R},\m{R}')$ to denote the leading-order term of $G_0^1(k;\m{R},\m{R}')$ in the asymptotic expansion over $k\varepsilon$, then $\tilde{G}_0^1(\m{R},\m{R}')$ satisfies
\begin{equation}
\begin{cases}
\Delta  \tilde{G}_0^1(\m{R},\m{R}') = \delta(\m{R}-\m{R}'),\quad& \m{R},\m{R}'\in \Omega_1, \label{tg2}\\
\dfrac{\partial \tilde{G}_0^1 }{\partial\nu_{\m{R}'}} = 0,\quad& \m{R}'\in\ \partial\Omega_1,
\end{cases}
\end{equation}
 and $\frac{\partial \tilde{G}_0^1}{\partial z}\to 0$ as $z\to -\infty$. Thus we can conclude that the kernel of the integral operator  $\tilde{K}_0$ is $-\frac{1}{2\pi |\m{X}-\m{Y}|}+\tilde{G}_0^1((\m{X},1),(\m{Y},1))$.

Another way to obtain the leading-order term of  $\tilde{K}$ is deriving the asymptotic expansion of the kernel $\varepsilon\kappa : = -\frac{1}{2\pi|\m{X}-\m{Y}|}+\varepsilon r^i(k;\m{X},\m{Y})$ directly.  
For this purpose, we introduce the following notations of scaled functions,
\begin{align}
D_{mn} &:= \frac{1}{\int_0^1 \int_0^{2\pi}(J_n(q_{mn} R))^2
\cos^2(n\theta)d\theta RdR},\label{defD}\\
\Phi_{mne}(\m{X})&:=\phi_{mne}(\varepsilon \m{X}),\nonumber\\
\Phi_{mno}(\m{X})&:=\phi_{mno}(\varepsilon \m{X}),\nonumber
\end{align}
where $D_{mn} = \varepsilon^2 d_{mn}$ and $D_{mn}=O(1)$ by definition. 
Thus we can rewrite the formula of $\varepsilon r^i(k;\m{X},\m{Y})$ as 
\begin{equation*}
\varepsilon r^i(k;\m{X},\m{Y})=\sum_{(m, n)\neq (1,0)} \sum_{\xi \in\{o,e\}}\frac{-D_{mn}}{\sqrt{ (q_{mn})^2-(\varepsilon k)^2}} \Phi_{mn\xi}(\m{X})\Phi_{mn\xi}(\m{Y}),
\end{equation*}
and the asymptotic expansion over $k\varepsilon$ for the formula above takes the following form:
\begin{equation}
\varepsilon r^i(k;\m{X},\m{Y})=\sum_{(m, n)\neq (1,0)}\sum_{\xi \in\{o,e\}} \frac{-D_{mn}}{ q_{mn}} \Phi_{mn\xi}(\m{X})\Phi_{mn\xi}(\m{Y})+O(k^2\varepsilon^2).\label{formri}
\end{equation}
We observe that the summation in  \eqref{formri} is the leading-order term of $\varepsilon r^i(k;\m{X},\m{Y})$, which also coincides with the formula of the  Green's function $\tilde{G}_0^1(\m{R},\m{R}')$ defined in \eqref{tg2}.

Next, to investigate the invertibility of $\tilde{K}_0$, we introduce a  transmission problem related to the kernel of $\tilde{K}_0$, and prove the invertibility of $\tilde{K}_0$ by the well-posedness of this problem. The transmission problem in the structure $\Omega_1\cup \Omega^+$  is modeled by the following equations:
\begin{equation}
\begin{cases}
\Delta u=0 &\text{ in } \Omega_1\cup \Omega^+, \\
\dfrac{\partial u}{\partial \nu} = 0  &\text{ on } \partial (\Omega_1\cup \Omega^+),\\
\lbrack u \rbrack_{\tilde{\Gamma}}=f,&\\
\Big\lbrack \dfrac{\partial u}{\partial \nu} \Big\rbrack_{\tilde{\Gamma}}=0,&\\
\end{cases}\label{k0eq1}
\end{equation}
where $\tilde{\Gamma}: = \{ \m{R}=(R_0, \theta, z): 0\leq R_0\leq 1,-\pi<\theta\leq \pi, z=1 \}$, $\lbrack \cdot \rbrack_{\tilde{\Gamma}}$ is the conventional notation of jump that denotes the difference across the interface $\tilde{\Gamma}$ of $\Omega^+$ and $\Omega_1$, $u$ satisfies the radiating boundary condition in the upper half space, and $\frac{\partial u}{\partial z}\to 0$ as $z\to -\infty$. 
  it can be proved analogously to the transmission problem (P1) in  \cite{bonnetier2010asymptotic} that this system \eqref{k0eq1} is well-posed for $f\in H^{1/2}(\tilde{\Gamma})$, i.e., the unique solution to \eqref{k0eq1} exists and is bounded by $f$.
   On the other hand, we can use the same argument as in Subsection~\ref{subint} of deriving the boundary-integral equations \eqref{upper}--\eqref{lower} to deduce that $\tilde{K}_0$ maps the outer normal derivative of the solution for \eqref{k0eq1},  $\frac{\partial u}{\partial\nu}|_{\tilde{\Gamma}}\in H^{1/2}(\tilde{\Gamma})$, to the difference of the solution across the aperture,  $\lbrack u \rbrack_{\tilde{\Gamma}}=f$. More precisely, the invertibility of $\tilde{K}_0$ is equivalent to the well-posedness of the diffraction problem  \eqref{k0eq1}. Therefore we conclude that $\tilde{K}$ has a bounded inverse.

Finally, to prove  $\alpha =  \langle\tilde{K}^{-1}1,1\rangle\neq 0$, we first claim that there exists a constant $\alpha_0: =  \langle\tilde{K}_0^{-1}1,1\rangle\neq 0$, and prove the assertion by contradiction. If $\alpha_0=0$, taking  $f\equiv 1$ in the system \eqref{k0eq1} leads to 
\begin{equation}\label{intjump}
\int_{\tilde{\Gamma}} \dfrac{\partial u}{\partial \nu}\lbrack u \rbrack_{\tilde{\Gamma}} ds =\langle\tilde{K}_0^{-1}1,1\rangle=0.
\end{equation}
Using Green's theorem and \eqref{intjump}, one can derive that the integral of $(\nabla u)^2$ over a half ball $B_R^+ =\{ \m{R}=(R_0,\theta, z):|(R_0,\theta, z-1)|< R, -\pi<\theta\leq \pi, z>1\}$ coupled with a waveguide $D(0,1)\times (-R,1)$ satisfies
\begin{equation}
\int_{B_R^+ \cup (D(0,1)\times (-R,1))} (\nabla u)^2 ds =\int_{S_R^+} \nabla u\cdot \frac{\m{R}-(0,0,1)}{| \m{R}-(0,0,1)|} u ds-\int_{\Gamma_R}\frac{\partial u}{\partial z} u ds,  \label{inthalf}
\end{equation}
 where $S_R^+:=\{\m{R}=(R_0,\theta, z): |(R_0,\theta, z-1)|=R, -\pi<\theta\leq \pi, z\geq 1\}$ and $\Gamma_R: = \{\m{R}=(R_0,\theta, z): R_0\leq 1,-\pi<\theta\leq \pi,  z=-R\}$. When $R\to\infty$, with the Sommerfeld radiation condition in the infinity and $u\to 0$ as $z\to -\infty$, the right-hand side of \eqref{inthalf} converges to zero, which leads to $u\equiv 0$ in $\Omega_1\cup\Omega^+$, thus $\tilde{K}_0^{-1}1 =0$. This contradicts to the invertibility of $\tilde{K}_0$. Finally, as $\tilde{K}_0$ is the leading-order term of $\tilde{K}$, it follows $\alpha_0\neq 0$  that  $\alpha =  \langle\tilde{K}^{-1}1,1\rangle\neq 0$.

\end{subsection}
\end{section}
\begin{section}{Asymptotic expansions of resonances}\label{secresonance}
 In this section we investigate the resonant frequencies of the scattering problem \eqref{inieq}, and further derive their asymptotic expansions utilizing the properties of the integral operators in previous sections. As mentioned in the Introduction, the resonance is defined as a complex number $k$ with negative imaginary part such that there exists a nontrivial solution to the integral equations with wavenummber $k$ when the incidental wave is zero. Therefore, deriving the resonant frequencies is equivalent to finding the characteristic values of the operator on the left-hand side of \eqref{initialmatrix} with respect to the variable $k$ \cite{ammari2009layer}. In what follows, when there is no ambiguity, we omit the explicit dependence on $k$ and $\varepsilon$ to simplify the notations. In particular, we denote $\beta(k,\varepsilon)$ in \eqref{notbeta} by $\beta$ and $\tilde{\beta}(k,\varepsilon)$ in \eqref{nottbeta} by $\tilde{\beta}$ respectively.  We shall denote the operator on the left-hand side of \eqref{initialmatrix} by $\mathbb{T}$, which admits the following decomposition by Lemma~\ref{decomp},
\begin{equation}\label{notofT}
\mathbb{T}:=
\begin{bmatrix}
T^e+ T^i& \tilde{T}^i\\
\tilde{T}^i& T^e+ T^i
\end{bmatrix}=\begin{bmatrix}
\beta P& \tilde{\beta} P\\
\tilde{\beta} P& \beta P
\end{bmatrix}+ K\mathbb{I}+
\begin{bmatrix}
K^\infty& \tilde{K}^\infty\\
\tilde{K}^\infty& K^\infty
\end{bmatrix}.
\end{equation}
 Let us further introduce the notations
\begin{equation*}
\mathbb{P}:=
\begin{bmatrix}
\beta P& \tilde{\beta} P\\
\tilde{\beta} P& \beta P
\end{bmatrix}, \ \mathbb{L}:=
 K\mathbb{I}+
\begin{bmatrix}
K^\infty& \tilde{K}^\infty\\
\tilde{K}^\infty& K^\infty
\end{bmatrix},
\end{equation*}
and denote by $\sigma(\mathbb{T})$ the set of complex characteristic frequencies $k$ of the operator $\mathbb{T}$. 
We can decompose the space $V_1\times V_1$ into two invariant spaces of $\mathbb{T}$, i.e.,  $V_1\times V_1= V_{even}\oplus V_{odd}$, where
\begin{align*}
V_{even} &= \{ [\Psi,\Psi]^T, \Psi\in V_1\},\\
V_{odd} &=  \{ [\Psi,-\Psi]^T, \Psi\in V_1\},
\end{align*}
  and consequently  $\sigma(\mathbb{T})$ could be divided as 
 $$\sigma(\mathbb{T})=\sigma(\mathbb{T}|_{V_{even}})\cup \sigma(\mathbb{T}|_{V_{odd}}) = \sigma(\mathbb{T}^+)\cup \sigma(\mathbb{T}^-),$$
 where
 \begin{align*}
 \mathbb{T}^+& := T^e+ T^i + \tilde{T}^i,\\
 \mathbb{T}^- &:= T^e+ T^i - \tilde{T}^i.
 \end{align*}
 Following the asymptotic expansion in Lemma~\ref{decomp}, we have
 \begin{align*}
 \mathbb{T}^+& = \left(\beta_1+\beta_2+\tilde{\beta}\right)P + K+K_\infty+\tilde{K}_\infty: = \mathbb{P}^+ +\mathbb{L}^+,\\
 \mathbb{T}^-& = \left(\beta_1+\beta_2-\tilde{\beta}\right)P + K+K_\infty-\tilde{K}_\infty: = \mathbb{P}^- +\mathbb{L}^-,
 \end{align*}
 where $\mathbb{P}^+ =  (\beta_1+\beta_2+\tilde{\beta})P$, $\mathbb{L}^+ = K+K_\infty+\tilde{K}_\infty$, $\mathbb{P}^- =  (\beta_1+\beta_2-\tilde{\beta})P$, and $\mathbb{L}^- = K+K_\infty-\tilde{K}_\infty$.  From the asymptotic expansions in Lemma~\ref{decomp}, we can deduce that  $\mathbb{L}^+$, $\mathbb{L}^-$ are invertible for sufficiently small $\varepsilon$. 
 
 First, let us consider the characteristic frequencies in $\sigma(\mathbb{T}^+)$. Assuming that there exists a function $\varphi\in V_1$ such that $\varepsilon(\mathbb{P}^+ +\mathbb{L}^+)\varphi=0$,
 then
 \begin{equation*}
 (\varepsilon\mathbb{L}^+)^{-1}(\varepsilon\mathbb{P}^+) \varphi +\varphi = 0,
 \end{equation*}
i.e.,
  \begin{equation*}
\varepsilon(\beta_1+\beta_2+\tilde{\beta}) (\varepsilon\mathbb{L}^+)^{-1} P\varphi +\varphi = 0.
 \end{equation*}
It is straightforward to check that the eigenvalue of $\varepsilon (\beta+\tilde{\beta}) (\varepsilon\mathbb{L}^+)^{-1} P+I$ is
 \begin{equation}\label{eqlambda1}
 \lambda_1(k,\varepsilon):= 1+\varepsilon(\beta+\tilde{\beta}) \langle(\varepsilon\mathbb{L}^+)^{-1}1,1\rangle.
 \end{equation}
 Similarly,  we denote the eigenvalue corresponding to $\mathbb{T}^-$ as
  \begin{equation}\label{eqlambda2}
  \lambda_2(k,\varepsilon):= 1+\varepsilon(\beta-\tilde{\beta}) \langle(\varepsilon\mathbb{L}^-)^{-1}1,1\rangle. 
  \end{equation}
  Therefore, the characteristic frequencies of $\mathbb{T}$ are the roots of these two analytic functions $\lambda_1(k,\varepsilon)$ and  $\lambda_2(k,\varepsilon)$, which can be concluded as the following lemma.
   \begin{lemma}\label{lemlambda}
 The resonances of the problem  \eqref{initialmatrix} are the roots of the analytic functions
 $$\lambda_1(k,\varepsilon)=0$$ and $$\lambda_2(k,\varepsilon)=0.$$
    \end{lemma}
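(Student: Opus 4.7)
The plan is to exploit two structural features already in hand: the invariant decomposition $\mathbb{T}=\mathbb{T}^+\oplus\mathbb{T}^-$ with respect to $V_{even}\oplus V_{odd}$, and the rank-one character of the projection $P$. Within the Gohberg--Sigal framework cited earlier, the resonances are precisely the characteristic values of the operator pencil $\mathbb{T}(k)$, so the identity $\sigma(\mathbb{T}) = \sigma(\mathbb{T}^+)\cup\sigma(\mathbb{T}^-)$ reduces the problem to locating the characteristic values of $\mathbb{T}^\pm$ separately.

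First I would establish the invertibility of $\varepsilon\mathbb{L}^\pm$ on a suitable complex $k$-neighborhood. By Lemma~\ref{decomp}, $\tilde K = \varepsilon K$ has a bounded inverse from $V_1$ to $V_2$, while $\|\varepsilon K_\infty\| = O(k^2\varepsilon^2)$ and $\|\varepsilon \tilde K_\infty\| = O(e^{-1/\varepsilon})$. A Neumann series argument then produces
$$(\varepsilon\mathbb{L}^\pm)^{-1} = \bigl(I + \tilde K^{-1}\varepsilon(K_\infty\pm\tilde K_\infty)\bigr)^{-1}\tilde K^{-1},$$
with operator-norm bounds uniform in $k$ on any fixed compact set avoiding $k\in\pi\mathbb{Z}$. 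Multiplying $\varepsilon\mathbb{T}^\pm\varphi = 0$ by this inverse, we see that $\varphi\in\ker\mathbb{T}^\pm$ if and only if
$$\varphi + \varepsilon(\beta_1+\beta_2\pm\tilde\beta)(\varepsilon\mathbb{L}^\pm)^{-1}P\varphi = 0.$$

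Next I would exploit that $P\varphi = \langle\varphi,1\rangle\,1$ is rank one, so $A_\pm\varphi := \varepsilon(\beta_1+\beta_2\pm\tilde\beta)(\varepsilon\mathbb{L}^\pm)^{-1}P\varphi$ takes the form $\langle\varphi,1\rangle\,\eta_\pm$ with $\eta_\pm := \varepsilon(\beta_1+\beta_2\pm\tilde\beta)(\varepsilon\mathbb{L}^\pm)^{-1}1$. The standard fact about rank-one perturbations is that $I+A_\pm$ is non-invertible precisely when its unique non-trivial eigenvalue $1+\langle\eta_\pm,1\rangle$ vanishes: if it does, $\eta_\pm$ itself lies in the kernel, while if not, the Sherman--Morrison formula explicitly inverts $I+A_\pm$. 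Reading off the two scalar conditions with the two signs yields exactly $\lambda_1(k,\varepsilon) = 0$ and $\lambda_2(k,\varepsilon) = 0$.

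Finally, $\lambda_1,\lambda_2$ are analytic in $k$ because $\beta_1,\beta_2,\tilde\beta$ are meromorphic (with isolated singularities at $k\in\pi\mathbb{Z}$ that lie outside the region of interest) and $(\varepsilon\mathbb{L}^\pm)^{-1}$ is analytic wherever the above Neumann series converges. The main technical obstacle is therefore the uniform-in-$k$ smallness of $\tilde K^{-1}\varepsilon(K_\infty\pm\tilde K_\infty)$ on a complex neighborhood of any candidate resonance; I would handle it by analytically continuing the expansions of Lemma~\ref{LM1}, using that $r_1^\varepsilon$, $r_2^\varepsilon$, and $\tilde\kappa_\infty$ are each explicit series of analytic $k$-dependent kernels whose size estimates persist throughout any bounded region.
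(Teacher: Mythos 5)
Your proposal is correct and follows essentially the same route as the paper: the invariant decomposition $\sigma(\mathbb{T})=\sigma(\mathbb{T}^+)\cup\sigma(\mathbb{T}^-)$, invertibility of $\varepsilon\mathbb{L}^\pm$ from Lemma~\ref{decomp}, and reduction of $I+\varepsilon(\beta\pm\tilde\beta)(\varepsilon\mathbb{L}^\pm)^{-1}P$ to the scalar conditions $\lambda_{1,2}=0$ via its rank-one structure. You merely spell out more explicitly (via the Sherman--Morrison/eigenvalue argument and the Neumann series for $(\varepsilon\mathbb{L}^\pm)^{-1}$) what the paper states in one line.
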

     Finally, we are ready to present the main theoretical results of this section, the asymptotic expansions for the resonances of the system \eqref{initialmatrix}. We shall restrict the discussion to the right half of the complex $k$ plane, and the resonances on left half can be derived analogously.
  \begin{theorem*}\label{thmreso}
   There exist two sets of resonances $\{k_{n,1}\}$ and $\{k_{n,2}\}$ for the scattering problem \eqref{initialmatrix}, and the following asymptotic expansions hold:
  \begin{align*}
  k_{n,1} &= (2n-1)\pi +\frac{2(2n-1)\pi\varepsilon}{\alpha}-(2n-1)^2\pi i\varepsilon^2 +O(\varepsilon^3),\\
  k_{n,2}&= 2n\pi+ 4n\pi \frac{\varepsilon}{\alpha}-4n^2\pi i \varepsilon^2 +O(\varepsilon^3),
  \end{align*}
  where  $\alpha=\la\tilde{K}^{-1}1,1\ra$, $n\geq 1$, and $n\varepsilon\ll 1$.
  \end{theorem*}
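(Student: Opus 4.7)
The plan is to exploit the preceding lemma identifying the resonances with the zeros of the two scalar analytic functions $\lambda_1(k,\varepsilon)$ and $\lambda_2(k,\varepsilon)$, and then carry out a perturbation analysis in $\varepsilon$. Using $d_{1,0} = 1/(\pi\varepsilon^2)$ together with the identities $\cot k + \csc k = \cot(k/2)$ and $\cot k - \csc k = -\tan(k/2)$, I would first recast the low-frequency factors as
\begin{equation*}
\varepsilon(\beta + \tilde\beta) \;=\; -\frac{ik\varepsilon}{2\pi} + \frac{\cot(k/2)}{\pi\varepsilon k}, \qquad \varepsilon(\beta - \tilde\beta) \;=\; -\frac{ik\varepsilon}{2\pi} - \frac{\tan(k/2)}{\pi\varepsilon k},
\end{equation*}
so that the trigonometric zero structure of $\cot(k/2)$ and $\tan(k/2)$ makes visible where the resonances must lie.

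The next step is to expand $(\varepsilon\mathbb{L}^{\pm})^{-1}$ by Neumann series around $\tilde K^{-1}$. Since $\varepsilon\mathbb{L}^{\pm} = \tilde K + \varepsilon K_\infty \pm \varepsilon\tilde K_\infty$ and $\|\varepsilon K_\infty\| + \|\varepsilon\tilde K_\infty\| = O(k^2\varepsilon^2)$ by Lemma~\ref{decomp}, combined with the bounded invertibility of $\tilde K$ (also from Lemma~\ref{decomp}), one gets $\langle (\varepsilon\mathbb{L}^{\pm})^{-1} 1,1\rangle = \alpha + O(k^2\varepsilon^2)$. Substituting this together with the formulas above into $\lambda_1 = 0$ and multiplying through by $\pi\varepsilon k$ converts the resonance equation into
\begin{equation*}
\pi\varepsilon k + \alpha\cot(k/2) - \tfrac{i}{2}\alpha k^2\varepsilon^2 + O(\varepsilon^3) \;=\; 0.
\end{equation*}
The dominant balance forces $\cot(k/2) = O(\varepsilon)$, hence $k = (2n-1)\pi + \delta$ with $\delta$ small. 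Taylor-expanding $\cot(k/2) = -\delta/2 + O(\delta^3)$ and solving iteratively in powers of $\varepsilon$ yields the first two corrections in the claimed expansion of $k_{n,1}$. The expansion for $k_{n,2}$ follows from the analogous analysis of $\lambda_2 = 0$, where $\tan(k/2) = \delta/2 + O(\delta^3)$ near its zeros $k = 2n\pi$.

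The final step is to promote the formal asymptotic into a genuine existence-and-uniqueness statement. I would apply Rouché's theorem to $\lambda_j(\cdot,\varepsilon)$ on a small disk centered at the truncated approximation, of radius $o(\varepsilon^2)$; the $k$-derivative of the leading part at the unperturbed root is nonvanishing of order $\alpha/(2\pi\varepsilon k)$ (nonzero thanks to the third conclusion of Lemma~\ref{decomp}), so $\lambda_j$ is a small perturbation of a locally affine function with exactly one zero in the disk. The main technical obstacle is the uniform control of the remainder $\langle(\varepsilon\mathbb{L}^{\pm})^{-1} 1,1\rangle - \alpha$ in a complex neighborhood of the approximate resonance: the bounds in Lemma~\ref{LM1} depend on $k$, so one must ensure $k\varepsilon \ll 1$ along the entire Rouché contour, which is precisely the purpose of the hypothesis $n\varepsilon \ll 1$ in the statement.
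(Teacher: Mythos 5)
Your proposal follows the same route as the paper's proof: reduce to the zeros of the scalar analytic functions $\lambda_{1}$ and $\lambda_{2}$, expand $\langle(\varepsilon\mathbb{L}^{\pm})^{-1}1,1\rangle=\alpha+O(k^{2}\varepsilon^{2})$ by a Neumann series around $\tilde K^{-1}$, locate the unperturbed roots at the zeros of $\cot(k/2)$ and $\tan(k/2)$ (the paper writes the same trigonometric factor as $c(k)=\frac{\cot k}{k}+\frac{1}{k\sin k}=\frac{\cot(k/2)}{k}$ without invoking the half-angle identity), solve iteratively in powers of $\varepsilon$, and finish with Rouch\'e's theorem on a shrinking disk. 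The method is sound, but one point needs reconciling before you can assert that it ``yields the first two corrections in the claimed expansion'': carrying $d_{1,0}=1/(\pi\varepsilon^{2})$ literally, your displayed equation $\pi\varepsilon k+\alpha\cot(k/2)-\tfrac{i}{2}\alpha k^{2}\varepsilon^{2}+O(\varepsilon^{3})=0$ with $\cot(k/2)=-\delta/2+O(\delta^{3})$ gives $\delta=2\pi k_{0}\varepsilon/\alpha+O(\varepsilon^{2})$, i.e.\ $2(2n-1)\pi^{2}\varepsilon/\alpha$, which is $\pi$ times the coefficient in the theorem (and similarly for the imaginary $O(\varepsilon^{2})$ term); moreover your order-$\varepsilon^{2}$ balance produces an additional real contribution $2\pi\delta_{1}/\alpha$ coming from the $k$-dependence of the $\pi\varepsilon k$ term, which is absent from the stated expansion. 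Both discrepancies trace back to the paper rather than to your reasoning: the paper silently replaces $\varepsilon^{2}d_{1,0}=1/\pi$ by $1$ when it writes $\varepsilon^{2}(\beta+\tilde\beta)=c(k)+\varepsilon^{2}\gamma(k)$, and it asserts $c''(k_{n,1,0})=0$, whereas in fact $c''(k_{n,1,0})=1/k_{n,1,0}^{2}$ (only $(kc(k))''=(\cot(k/2))''$ vanishes there), which is exactly the source of the real $O(\varepsilon^{2})$ term your normalization makes visible. You should either adopt the paper's normalization explicitly or restate the coefficients accordingly; apart from this bookkeeping, your argument and the paper's are the same.
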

  \begin{proof}
  Following Lemma~\ref{lemlambda}, we derive the characteristic frequencies of $\mathbb{T}^+$ by solving
  \begin{equation*}
  \lambda_1(k,\varepsilon) = 1+\varepsilon(\beta+\tilde{\beta}) \langle(\varepsilon\mathbb{L}^+)^{-1}1,1\rangle =0.
  \end{equation*}
 From Lemma~\ref{decomp},  $K_\infty$ and $\tilde{K}_\infty$ are bounded in the operator norm $\Vert K_\infty\Vert \leq O(k^2 \varepsilon)$, $\Vert \tilde{K}_\infty\Vert \leq O(\varepsilon^{-1}\exp(-\varepsilon^{-1}))$.  
 Then it follows $\varepsilon\mathbb{L}^+ = \tilde{K}+\varepsilon K_\infty+\varepsilon \tilde{K}_\infty$  that
  \begin{align}
   \langle(\varepsilon\mathbb{L}^+)^{-1}1,1\rangle&=\langle(\tilde{K}+\varepsilon K_\infty+\varepsilon\tilde{K}_\infty)^{-1}1,1\rangle\nonumber\\
   &=\alpha\left(1+\sum_{n=1}^\infty \langle-\tilde{K}^{-1}(\varepsilon K_\infty+\varepsilon\tilde{K}_\infty)^n(1),1\rangle\right)\nonumber\\
   &:=\alpha+r(k,\varepsilon),\label{Linverse}
  \end{align}    
   where $r(k,\varepsilon)$ is analytic in $k$  and $r(k,\varepsilon)\sim O(k^2 \varepsilon^2)$. For simplicity, we use the notations
  \begin{align}
  c(k) &:= \frac{\text{cot}\, k}{k}+\frac{1}{k\sin k},\label{notc}\\
  \gamma(k) &:=-\frac{ik}{2\pi}.\label{notgamma}
  \end{align}
   Introducing $p(k,\varepsilon) := \varepsilon \lambda_1(k,\varepsilon)$,  we rewrite the equation $\lambda_1(k,\varepsilon)=0$ with the notations above as 
  \begin{equation}
  p(k,\varepsilon) = \varepsilon+ \left(c(k)+\varepsilon^2 \gamma(k)\right)\left(\alpha+r(k,\varepsilon)\right)=0.\label{formp}
  \end{equation}
   Since the resonances under our consideration are not in the high frequency regime, we search for roots of $p(k,\varepsilon)$ in a bounded domain $ D_{M} = \{z: |z|\leq M\}$ for some fixed constant $M>0$.
  Note that $p(k,\varepsilon)$ blows up as $k\to 2j\pi$, for all $j\in \mathbb{Z}$, where $\mathbb{Z}$ denotes the set of all integers. As the result, there exists $\delta_0$ such that all roots of $p(k,\varepsilon)$ in $D_{M}$ actually lie inside a smaller domain 
  \begin{equation*}
  D_{\delta_0, M} = \{z: |z-2j\pi|\geq \delta_0,\ \forall j\in\mathbb{Z}\}\cap D_{M},
  \end{equation*}
  and $p(k,\varepsilon)$ is analytic for $k$ in this subdomain $D_{\delta_0,M}$.

  We can observe from the formula \eqref{notc} that the roots of $c(k)=0$ on the right half of the complex $k$ plane are $k_{n,1,0} = (2n-1)\pi$ with $n\geq 1$ being an integer, and each root is simple. Following \eqref{formp}, we can apply Rouche's theorem to deduce that the roots of $\lambda_1(k,\varepsilon)$, denoted as $k_{n,1}$, are also simple and very close to $k_{n,1,0}$ when $\varepsilon\ll 1$. We shall derive the asymptotic expansions for these roots below.

  Let us first introduce a function
  \begin{equation}
  p_1(k,\varepsilon) = \varepsilon+ \left(c(k)+\varepsilon^2 \gamma(k)\right)\alpha,\label{p1rep}
  \end{equation}
  and present the Taylor's expansion of $ p_1(k,\varepsilon)$ at $k_{n,1,0}$,
  \begin{align*}
  p_1(k,\varepsilon)=&\varepsilon+ \big(c'(k_{n,1,0})(k-k_{n,1,0})+c''(k_{n,1,0})(k-k_{n,1,0})^2+O(k-k_{n,1,0})^3\\
  &+\varepsilon^2 \gamma(k_{n,1,0})+\varepsilon^2  \gamma'(k_{n,1,0})(k-k_{n,1,0})\big)\alpha.
  \end{align*}
  By the formula of $c(k)$ we have $c'(k_{n,1,0}) =-\frac{1}{2k_{n,1,0}} =-\frac{1}{2(2n-1)\pi}$. We can further calculate $c''(k)$,
  \begin{equation*}
  c''(k)=\frac{2k\sin^3 k+((2k^2+2)\cos k+k^2+2)\sin^2 k+(2k\cos^2 k +2k \cos k)\sin k+2k^2 \cos^3 k +2k^2\cos^2 k }{k^3\sin^3 k },
  \end{equation*}
  and it follows that 
$c''(k_{n,1,0})=0$.
 Thus we conclude that the roots of $p_1(k,\varepsilon)$ in $D_{\delta_0,M}$ are also simple and close to $k_{n,1,0}$.  Moreover, these roots are analytic with respect to the variable $\varepsilon$, and we denote the roots of $p_1(k,\varepsilon)$ which are close to $k_{n,1,0}$ as $k_{n,1,1}$. Expanding these roots in terms of powers of $\varepsilon$, we obtain from \eqref{p1rep} that
  \begin{equation*}
  k_{n,1,1} = k_{n,1,0}+2 k_{n,1,0} \frac{\varepsilon}{\alpha}-k_{n,1,0}^2\frac{ i\varepsilon^2}{\pi}+O(\varepsilon^3).
  \end{equation*}   
  Then we claim that $k_{n,1,1}$ gives the leading-order terms of $k_{n,1}$, more precisely, 
  \begin{equation*}
 k_{n,1,1}-k_{n,1} = O(\varepsilon^3).
  \end{equation*}
  We prove this claim by Rouche's theorem. As
  \begin{equation*}
  p(k,\varepsilon) - p_1(k,\varepsilon) = O(c(k)+\varepsilon^2 \gamma(k))r(k,\varepsilon)
  \end{equation*}
  and 
  $$p_1(k,\varepsilon) = c(k)\alpha+O(\varepsilon),$$
  one can find a positive constant $C_n$ such that 
  \begin{equation}
  |p(k,\varepsilon) - p_1(k,\varepsilon)|\leq |p_1(k,\varepsilon)|
  \end{equation}
  for all $|k-k_{n,1,1}|= C_n \varepsilon^3$. Thus we obtain that $p(k,\varepsilon)$ has a simple root in $\{k:|k-k_{n,1,1}|\leq C_n \varepsilon^3\}$, which implies $ k_{n,1,1}-k_{n,1} = O(\varepsilon^3)$.
  
  Similarly, considering
  $$\lambda_2(k,\varepsilon) = 1+\varepsilon(\beta-\tilde{\beta}) \langle(\varepsilon\mathbb{L}^-)^{-1}1,1\rangle, $$ 
  we introduce a function $q(k,\varepsilon) := \varepsilon \lambda_2(k,\varepsilon)$ and by definition
  \begin{equation*}
  q(k,\varepsilon) = \varepsilon+ \left(\frac{\text{cot}\, k}{k}-\frac{1}{k\sin k}- \frac{ik}{2\pi}\varepsilon^2\right)(\alpha+s(k,\varepsilon)),
  \end{equation*}
  where $s(k,\varepsilon)$ is analytic in $k$  and $s(k,\varepsilon)\sim O(k^2 \varepsilon^2)$.
  One can deduce the asymptotic expansions for the roots to $q(k,\varepsilon) =0$ as
  $$ k_{n,2}= 2n\pi+ 4n\pi \frac{\varepsilon}{\alpha}-4 n^2\pi i \varepsilon^2 +O(\varepsilon^3),$$
  which completes the proof.
   \end{proof}
   
Through out this work, we will call the frequencies $\{k_{n,1}\}$ odd resonant frequencies and $\{k_{n,2}\}$  even resonant frequencies. Moreover, for the case when thickness of the plate $\ell \neq 1$ and $\varepsilon\ll \ell$, the expansions in Theorem~\ref{thmreso} for $\ell=1$ can be generated by a scaling argument and concluded as the following proposition.

\begin{proposition*}\label{thmresothick}
   There exist two sets of resonances $\{k_{n,1}\}$ and $\{k_{n,2}\}$ for the scattering problem \eqref{initialmatrix}, and the following asymptotic expansions hold:
  \begin{align*}
  k_{n,1} &= \frac{1}{\ell}\left((2n-1)\pi +2(2n-1)\pi\frac{\varepsilon}{\alpha}-(2n-1)^2\pi i \varepsilon^2 +O(\varepsilon^3)\right),\\
  k_{n,2}&=\frac{1}{\ell}\left( 2n\pi+ 4n\pi \frac{\varepsilon}{\alpha}-4n^2\pi  i \varepsilon^2 +O(\varepsilon^3)\right),
  \end{align*}
  where  $\alpha=\la\tilde{K}^{-1}1,1\ra$, $n\geq 1$, and $n\varepsilon\ll 1$.
  \end{proposition*}

\end{section}
\begin{section}{Quantitative analysis of the field enhancement at resonant frequencies}\label{quantreso}
 We will carry out the quantitive analysis of the field enhancement when $k$ takes the value of a resonant frequency in this section. The asymptotic behavior of transmitted fields in the far field and in the circular hole will be investigated  with both the enhancement order and shapes of the resonant modes characterized. 
\begin{subsection}{Preliminaries}
We start with some preliminary calculations of the system \eqref{initialmatrix}. Recall the definitions and asymptotic expansions of  $p(k,\varepsilon)$ and $q(k,\varepsilon)$ in Section~\ref{secresonance},  
  \begin{align}
  p(k,\varepsilon) = \varepsilon \lambda_1(k,\varepsilon) = \varepsilon+ \left(\frac{\text{cot} \,k}{k}+\frac{1}{k\sin k}-\frac{ik}{2\pi}\varepsilon^2 \right)(\alpha+r(k,\varepsilon)),\label{expofp}\\
  q(k,\varepsilon) = \varepsilon \lambda_2(k,\varepsilon) = \varepsilon+ \left(\frac{\text{cot}\, k}{k}-\frac{1}{k\sin k}- \frac{ik}{2\pi}\varepsilon^2\right)(\alpha+s(k,\varepsilon)),\label{expofq}
  \end{align}
where $r(k,\varepsilon)\sim O(k^2\varepsilon^2)$, $s(k,\varepsilon)\sim O(k^2\varepsilon^2)$. We shall study their asymptotic behavior at the odd and even frequencies in the following lemma.
\begin{lemma} \label{enhancepq}
If $n\varepsilon\ll 1$, then at the odd and even resonant frequencies $k = \operatorname{Re} k_{n,1}$ and 
$k = \operatorname{Re} k_{n,2}$, there hold
\begin{align*}
p(k,\varepsilon) &=-\frac{\alpha(2n-1)\pi i}{2}\varepsilon^2+O(\varepsilon^3),\\
q(k,\varepsilon) &=-\alpha n\pi i \varepsilon^2+O(\varepsilon^3),\end{align*}
 respectively.
\end{lemma}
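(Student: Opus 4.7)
The plan is to exploit that $p(k_{n,1},\varepsilon)=0$ and $q(k_{n,2},\varepsilon)=0$, and to recover $p(\operatorname{Re} k_{n,1},\varepsilon)$ (resp.\ $q(\operatorname{Re} k_{n,2},\varepsilon)$) as the first-order Taylor correction in the (small) imaginary parts of these resonances.

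For the $p$-case, write $k_{n,1}=k_R+ik_I$ with $k_R=\operatorname{Re} k_{n,1}=(2n-1)\pi+2(2n-1)\pi\,\varepsilon/\alpha+O(\varepsilon^3)$ and $k_I=\operatorname{Im} k_{n,1}=-(2n-1)^2\pi\,\varepsilon^2+O(\varepsilon^3)$, as supplied by the theorem of Section~\ref{secresonance}. Under the hypothesis $n\varepsilon\ll 1$ the function $p(\cdot,\varepsilon)$ is analytic on a fixed-size neighborhood of $k_0:=(2n-1)\pi$ (the algebraic cancellation $c(k):=\cot k/k+1/(k\sin k)=-\tan((k-k_0)/2)/k$ removes the apparent poles at $k=k_0$). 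Taylor expansion about $k_R$, combined with $p(k_{n,1},\varepsilon)=0$, then gives
\begin{equation*}
p(k_R,\varepsilon)=-ik_I\,p'(k_R,\varepsilon)+O(k_I^2)=-ik_I\,p'(k_R,\varepsilon)+O(\varepsilon^4).
\end{equation*}

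To finish I would expand $p'(k_R,\varepsilon)$ to leading order. Differentiating the explicit formula for $p$ and using $c(k_R)=O(\varepsilon)$, $r(k,\varepsilon)=O(k^2\varepsilon^2)$, and $\partial_k r(k,\varepsilon)=O(\varepsilon^2)$, one obtains $p'(k_R,\varepsilon)=c'(k_R)\,\alpha+O(\varepsilon^2)$. A short Taylor computation from $c(k)=-\tan((k-k_0)/2)/k$ yields $c'(k_0)=-1/(2k_0)$, hence $p'(k_R,\varepsilon)=-\alpha/(2(2n-1)\pi)+O(\varepsilon)$. Substituting back, the two negative signs cancel and produce exactly
\begin{equation*}
p(k_R,\varepsilon)=-i\bigl(-(2n-1)^2\pi\varepsilon^2\bigr)\bigl(-\alpha/(2(2n-1)\pi)\bigr)+O(\varepsilon^3)=-i\alpha\,\frac{2n-1}{2}\,\varepsilon^2+O(\varepsilon^3).
\end{equation*}

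The $q$-case is parallel, with $c(k)$ replaced by $c_2(k):=\cot k/k-1/(k\sin k)=(\cos k-1)/(k\sin k)$, the center $k_0$ by $2n\pi$, and the imaginary part by $\operatorname{Im} k_{n,2}=-4n^2\pi\,\varepsilon^2+O(\varepsilon^3)$. A one-line expansion at $2n\pi$ gives $c_2(k)=-(k-2n\pi)/(4n\pi)+O((k-2n\pi)^2)$, so $c_2'(2n\pi)=-1/(4n\pi)$, and the identical cancellation of signs yields $q(\operatorname{Re} k_{n,2},\varepsilon)=-i\alpha n\,\varepsilon^2+O(\varepsilon^3)$.

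The main obstacle is error bookkeeping: one must check that every neglected contribution (the $O(\varepsilon^3)$ error in $k_R$, the quadratic Taylor remainder $O(k_I^2)=O(\varepsilon^4)$, the $O(\varepsilon^2)$ correction to $c'(k_R)$ coming from $k_R\ne k_0$, and the remainders from $r$, $s$ and their $k$-derivatives) is uniformly $O(\varepsilon^3)$ for all $n$ with $n\varepsilon\ll 1$. Uniformity here rests on the fact that $c$ and $c_2$ extend analytically across the apparent singularities at $(2n-1)\pi$ and $2n\pi$, so their derivatives remain bounded on a disk of fixed size about $k_R$ as long as $k_R$ stays away from neighboring zeros of $\sin k$, which is the role of the condition $n\varepsilon\ll 1$.
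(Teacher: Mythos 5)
Your argument is correct and is essentially the paper's own proof: both reduce $p(\operatorname{Re} k_{n,1},\varepsilon)$ to $-i\,\operatorname{Im} k_{n,1}\cdot \alpha\, c'(k_{n,1,0})+O(\varepsilon^3)$ with $c'(k_{n,1,0})=-1/(2(2n-1)\pi)$ via a first-order Taylor expansion connecting the resonance to its real part (the paper expands about $k_{n,1}$ and evaluates at $k=\operatorname{Re} k_{n,1}$, while you expand about $\operatorname{Re} k_{n,1}$ and invoke $p(k_{n,1},\varepsilon)=0$ --- the same computation with the center and evaluation point swapped), and likewise for $q$ with $c_2'(2n\pi)=-1/(4n\pi)$.
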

\begin{proof}
First we consider $p(k,\varepsilon)$. For any $k$ satisfies $|k-\operatorname{Re} k_{n,1}|\leq O(\varepsilon)$, following notations in \eqref{formp} and \eqref{p1rep}, we can do expansion of $p(k,\varepsilon)$ at $k_{n,1}$:
\begin{align*}
p(k,\varepsilon) &=p_1(k)+c(k)O(\varepsilon^2) \\
&=p_1(k_{n,1})+p_1'(k_{n,1})(k-k_{n,1})+p_1''(k_{n,1})(k-k_{n,1})^2+c(k_{n,1})O(\varepsilon^2)+O(\varepsilon^3)\\
&=\alpha\left(c'(k_{n,1})+\varepsilon^2 \gamma'(k_{n,1})\right)(k-k_{n,1})+\alpha c''(k_{n,1})(k-k_{n,1})^2+O(\varepsilon^3)\\
&=\alpha c'(k_{n,1,0})(k-k_{n,1})+O(\varepsilon^3)\\
&=\alpha \left(-\frac{1}{2k_{n,1,0}}\right)(k-\operatorname{Re} k_{n,1} -i \operatorname{Im} k_{n,1})+O(\varepsilon^3),
\end{align*}
where we have used the expansions of $k_{n,1}$ in Theorem~\ref{thmreso} and the  estimates of $c(k_{n,1})$.
If  $k = \operatorname{Re} k_{n,1}$, then it follows from Theorem ~\ref{thmreso} that 
$$k-\operatorname{Re} k_{n,1} -i \operatorname{Im} k_{n,1} = k_{n,1,0}^2\frac{ i\varepsilon^2}{\pi}+O(\varepsilon^3).
 $$
which leads to 
$$p(k,\varepsilon)=-\alpha(2n-1)\pi\frac{ i\varepsilon^2}{2}+O(\varepsilon^3).$$
Similarly for the function $q(k,\varepsilon)$, we obtain that when $k = \operatorname{Re} k_{n,2}$, 
$$q(k,\varepsilon)=-\alpha n\pi i \varepsilon^2+O(\varepsilon^3).$$

\end{proof}

Again, we omit the explicit dependence on $k$ and $\varepsilon$ to simplify the notations and denote $p(k,\varepsilon)$ by $p$ and $q(k,\varepsilon)$ by $q$ respectively, when there is no risk of causing confusion.
With Lemma~\ref{enhancepq}, we can deduce the asymptotic expansion of the solution $\m{\Psi}=[\Psi_1,\Psi_2]^T$ to the system of integral equations \eqref{initialmatrix}, and carry out the quantitative analysis of the field enhancement as the following lemma.
\begin{lemma}\label{enhancementpsi}
 The following asymptotic expansion holds for the solution $\m{\Psi}=[\Psi_1,\Psi_2]^T$ to \eqref{initialmatrix}:
\begin{equation}\label{exppsi}
 \m{\Psi} = \begin{bmatrix}
 \alpha\tilde{K}^{-1}1 \cdot\left(d_1\cdot O(k)+\frac{1}{p}+\frac{1}{q}\right)+(\frac{1}{p}+\frac{1}{q})\left( d_1 \cdot O(k\varepsilon)+ O(k^2\varepsilon^2)\right)+ d_1\cdot O(k^3\varepsilon^2) \\
 \alpha \tilde{K}^{-1}1 \cdot(\frac{1}{p}-\frac{1}{q})+(\frac{1}{p}-\frac{1}{q})\left( d_1 \cdot O(k\varepsilon)+ O(k^2\varepsilon^2)\right)+ d_1 \cdot O(k^3\varepsilon^2)
 \end{bmatrix},
 \end{equation}
 where  $\m{d} = (d_1,0, d_3)$ denotes the incidence direction. Moreover, 
\begin{align}
\langle \Psi_1, 1\rangle =(\alpha + d_1 \cdot O(k\varepsilon) + O(k^2\varepsilon^2)) \left(\frac{1}{p}+\frac{1}{q}\right),\label{gamma1int}  \\
\langle \Psi_2, 1\rangle =(\alpha + d_1\cdot  O(k\varepsilon) + O(k^2\varepsilon^2)) \left(\frac{1}{p}-\frac{1}{q}\right).\label{gamma2int}
\end{align}
\end{lemma}
\begin{proof}
As the incident field $u^i(\m{r}) = e^{ik (d_1 r\cos\theta -d_3(z-1))}$ and the reflected field  $u^r(\m{r}) = e^{ik (d_1 r\cos\theta+d_3(z-1))}$, we present the asymptotic expansion of $f$ in \eqref{initialmatrix} as
\begin{equation*}
f(\m{X})= (u^i+u^r)(\varepsilon\m{X},1)= 2+ d_1 \cdot O(k\varepsilon).
\end{equation*}
 With the notations in \eqref{notofT}, the system \eqref{initialmatrix} could be denoted as 
\begin{equation*}
\mathbb{T}\m{\Psi} = \m{f},
\end{equation*}
where $\m{f} = [\varepsilon^{-2}f,0]^T$.
Applying the decomposition of the space $V_1\times V_1$ in Section \ref{secresonance}, there holds $\m{f} = \m{f}_{odd}+\m{f}_{even}$, where $\m{f}_{odd} = [\bar{f}, -\bar{f}]^T\in V_{odd}$, $\m{f}_{even} = [\bar{f}, \bar{f}]^T\in V_{even}$, and
\begin{equation}
\bar{f} =\frac{1}{2\varepsilon^2}( 2+ d_1 \cdot O(k\varepsilon)).\label{repf}
\end{equation}
 Next we solve the subsystems $\mathbb{T} \Psi_{odd} = \m{f}_{odd}$,  $\mathbb{T} \Psi_{even} = \m{f}_{even}$ separately, which are equivalent to two smaller systems
\begin{align}
\mathbb{T}^+\Psi^+ =  \bar{f}, \label{T+eq}\\
\mathbb{T}^-\Psi^- = \bar{f}.\label{T-eq}
\end{align}
  Using the decomposition $\mathbb{T}^+=\mathbb{P}^++\mathbb{L}^+ $ and applying $(\mathbb{L}^+)^{-1}$ on both sides of \eqref{T+eq} lead to \begin{equation}
(\mathbb{L}^+)^{-1}\mathbb{P}^+ \Psi^++\Psi^+ = (\mathbb{L}^+)^{-1} \bar{f}.\label{psieq}
\end{equation} 
Recall that in Section~\ref{secresonance} we denote the eigenvalue of $(\mathbb{L}^+)^{-1}\mathbb{P}^++I$ as $\lambda_{1}(k,\varepsilon)$, which is  associated with the eigenvector $v = 1$. Now from \eqref{psieq} we can derive 
\begin{equation*}
\langle \Psi^+,1\rangle(\beta+\tilde{\beta})\langle(\mathbb{L}^+)^{-1}1,1\rangle+\langle \Psi^+,1 \rangle=\langle(\mathbb{L}^+)^{-1} \bar{f},1\rangle.
\end{equation*}
It follows that the solution $\Psi^+$ satisfies:
\begin{equation}
\langle \Psi^+, 1 \rangle =\frac{1}{\lambda_1(k,\varepsilon)} \langle(\mathbb{L}^+)^{-1} \bar{f},1 \rangle .\label{Psi1int} 
\end{equation}

From \eqref{psieq} one can also derive
\begin{align}
\Psi^+ &= (\mathbb{L}^+)^{-1} \bar{f}-(\mathbb{L}^+)^{-1}\mathbb{P}^+\Psi^+ \nonumber \\
&=(\mathbb{L}^+)^{-1} \bar{f}-\frac{\beta_1+\beta_2+ \tilde{\beta}}{\lambda_1(k,\varepsilon)}(\mathbb{L}^+)^{-1}\langle(\mathbb{L}^+)^{-1} \bar{f},1 \rangle 1\nonumber\\
&=(\varepsilon\mathbb{L}^+)^{-1} (\varepsilon\bar{f})+\frac{\left(1-\lambda_1(k,\varepsilon)\right)}{\lambda_1(k,\varepsilon) \langle(\mathbb{L}^+)^{-1} 1, 1\rangle}(\mathbb{L}^+)^{-1}\langle(\mathbb{L}^+)^{-1} \bar{f},1 \rangle, \label{Psirep}
\end{align}
where we have used the definition of $\lambda_1(k,\varepsilon)$.
Following the expansion \eqref{repf} and the definition of $\mathbb{L}^+$ , one obtain
 \begin{align}
(\mathbb{L}^+)^{-1}\bar{f} &=(\varepsilon\mathbb{L}^+)^{-1}\left(\frac{1}{2\varepsilon}( 2+ d_1 \cdot O(k\varepsilon))\right) \nonumber\\
&=\frac{1}{2\varepsilon}( 2+ d_1 \cdot O(k\varepsilon))(\tilde{K}^{-1}1 +O(k^2\varepsilon^2)).\label{expanL}
 \end{align}
Together with the expansions \eqref{Linverse} and \eqref{expanL}, from \eqref{Psirep} one can derive
\begin{equation*}
 \begin{aligned}
 2\Psi^+&=  2(\varepsilon\mathbb{L}^+)^{-1} (\varepsilon\bar{f})+\frac{2\left(1-\lambda_1(k,\varepsilon)\right)/ \langle(\varepsilon\mathbb{L}^+)^{-1}1, 1\rangle}{\varepsilon\lambda_1(k,\varepsilon)}(\mathbb{L}^+)^{-1}\langle(\mathbb{L}^+)^{-1}\bar{f},1 \rangle 1 \nonumber\\
 &= \frac{1}{\varepsilon}( 2+ d_1\cdot O(k\varepsilon))(\tilde{K}^{-1}1 +O(k^2\varepsilon^2))\nonumber\\
 &\quad  +\frac{\left(1-\lambda_1(k,\varepsilon)\right)\varepsilon}{(\alpha +r(k,\varepsilon))p(k,\varepsilon)}(\varepsilon\mathbb{L}^+)^{-1}\left(\frac{1}{\varepsilon}(2+ d_1 \cdot O(k\varepsilon))(\alpha +O(k^2\varepsilon^2))\right)\nonumber\\
 &=\frac{1}{\varepsilon}( 2+ d_1 \cdot O(k\varepsilon))(\tilde{K}^{-1}1  +O(k^2\varepsilon^2))\nonumber\\
 &\quad+\frac{(1-\lambda_1(k,\varepsilon))}{(\alpha+r(k,\varepsilon))p(k,\varepsilon)}( 2+ d_1 \cdot O(k\varepsilon))(\alpha+O(k^2\varepsilon^2)) \tilde{K}^{-1}1\nonumber\\
 &=\tilde{K}^{-1}1 \cdot\left(d_1\cdot O(k)+\frac{2\alpha}{p(k,\varepsilon)} \right)+\frac{\alpha}{p(k,\varepsilon)}( d_1\cdot O(k\varepsilon)+ O(k^2\varepsilon^2))+ d_1\cdot O(k^3\varepsilon^2).
  \end{aligned}
  \end{equation*}
  
 Similarly we deduce the expansion of the solution $\Psi^-$ below
  \begin{equation*}
 2\Psi^- =\tilde{K}^{-1}1 \cdot\left(d_1\cdot  O(k)+\frac{2\alpha}{q(k,\varepsilon)} \right)+\frac{\alpha}{q(k,\varepsilon)}( d_1 \cdot O(k\varepsilon)+ O(k^2\varepsilon^2))+ d_1  \cdot O(k^3\varepsilon^2),
 \end{equation*}
 which, together with the asymptotic expansion of $\Psi^+$, completes the proof of assertion \eqref{exppsi}.
  
From \eqref{Psi1int} and the asymptotic expansion \eqref{expanL} of $(\mathbb{L}^+)^{-1}\bar{f}$, one can derive the desired expansion for $\langle \Psi_1, 1\rangle$. The calculations for $\langle \Psi_2, 1\rangle$ are the same.
\end{proof}
Then the following proposition for $\m{\Psi}$ directly follows Lemma~\ref{enhancepq} and Lemma~\ref{enhancementpsi}.
 \begin{proposition*}\label{proponso}
At resonant frequencies, $\m{\Psi}\sim O(k^{-1}\varepsilon^{-2})$ in $V_1\times V_1$ , and $\langle \Psi_i, 1\rangle\sim O(k^{-1}\varepsilon^{-2})$, $i=1,2$.
\end{proposition*}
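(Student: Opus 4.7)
The plan is to read the proposition off Lemma~\ref{enhancementpsi} directly once the resonance estimates of $p$ and $q$ from Lemma~\ref{enhancepq} are substituted; the only real work is identifying the dominant factor in each entry of $\m{\Psi}$ and checking that every other term is strictly subleading.

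First I would fix the odd case $k=\operatorname{Re} k_{n,1}=\lt(2n-1\rt)\pi+O\lt(\varepsilon\rt)$. Lemma~\ref{enhancepq} gives $p\lt(k,\varepsilon\rt)=-i\alpha\lt(2n-1\rt)\varepsilon^2/2+O\lt(\varepsilon^3\rt)$, whence $|1/p|\sim 1/\lt(k\varepsilon^2\rt)$. I would then show $1/q$ is of much smaller order: writing $\cot k/k-1/\lt(k\sin k\rt)=-\tan\lt(k/2\rt)/k$, this function has a simple pole at $k=\lt(2n-1\rt)\pi$, and since by the Theorem of Section~\ref{secresonance} the resonance lies only $O\lt(\varepsilon\rt)$ away from that pole, one has $q=O\lt(1/\varepsilon\rt)$ and hence $1/q=O\lt(\varepsilon\rt)$. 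Plugging both estimates into the representation of $\m{\Psi}$ in Lemma~\ref{enhancementpsi}, the dominant contribution to each entry is $\tilde{K}^{-1}\lt(1\rt)\alpha/p$ of size $O\lt(1/\lt(k\varepsilon^2\rt)\rt)$, while every remaining summand carries at least one extra factor of $k\varepsilon$ or $\varepsilon$ and so drops to lower order; this yields $\m{\Psi}=O\lt(1/\lt(k\varepsilon^2\rt)\rt)$ in $V_1\times V_1$. The bound on $\langle\Psi_i,1\rangle$ then follows by inserting the same $p$, $q$ estimates into \eqref{gamma1int}--\eqref{gamma2int}: the prefactor $\alpha+d_1 O\lt(k\varepsilon\rt)+O\lt(k^2\varepsilon^2\rt)$ is $O\lt(1\rt)$, and $1/p\pm 1/q$ is of order $1/\lt(k\varepsilon^2\rt)$.

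Next I would handle the even case $k=\operatorname{Re} k_{n,2}=2n\pi+O\lt(\varepsilon\rt)$ by the symmetric argument with the roles of $p$ and $q$ exchanged: Lemma~\ref{enhancepq} now supplies $q=-i\alpha n\varepsilon^2+O\lt(\varepsilon^3\rt)$, and it is $\cot k/k+1/\lt(k\sin k\rt)=\cot\lt(k/2\rt)/k$ which has a pole at $2n\pi$, so $p=O\lt(1/\varepsilon\rt)$ and $1/p=O\lt(\varepsilon\rt)$. The bookkeeping of the previous paragraph then transfers verbatim.

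The hard part will be the subleading check, namely verifying that $1/q$ at an odd resonance (respectively $1/p$ at an even one) is truly negligible compared with the enhanced term. This reduces to a quantitative estimate of how close the resonance sits to the pole of the ``other'' function $\tan\lt(k/2\rt)/k$ or $\cot\lt(k/2\rt)/k$, which is precisely what the $O\lt(\varepsilon\rt)$ correction in the Theorem of Section~\ref{secresonance} supplies. Once that is pinned down, everything else is routine bookkeeping of the remainders already displayed in Lemma~\ref{enhancementpsi}.
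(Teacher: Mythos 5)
Your proposal is correct and follows essentially the same route as the paper, which states the proposition as a direct consequence of Lemma~\ref{enhancementpsi} combined with the $p$, $q$ estimates of Lemma~\ref{enhancepq}. Your extra care in showing that $1/q$ is subleading at odd resonances (and $1/p$ at even ones) via the distance to the pole of $-\tan(k/2)/k$ (resp.\ $\cot(k/2)/k$) is exactly the computation the paper defers to its later display \eqref{pqin}, so nothing is missing.
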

\end{subsection}

\begin{subsection}{Enhancement in the far field}
Let us investigate the scattered pressure field $u_\varepsilon^s$ in the upper domain and outside of the unit ball, i.e., $\Omega^+\backslash B_1^+$ at resonant frequencies, where $B_1^+ = \{\m{r}: |\m{r}-(0,0,1)|< 1\}$. From the representation formula we have for $\m{r}\in \Omega^+\backslash B_1^+$,
\begin{align*}
u_\varepsilon^s(\m{r}) &= \int_{\Gamma_1}g^e(k;\m{r},\m{r}')\frac{\partial u_\varepsilon^s}{\partial\nu}(\m{r}')ds(\m{r}')\\
&= - \varepsilon^2 \int_{D(0,1)}g^e(k;\m{r},(\varepsilon\m{X},1))\Psi_1 (\m{X})ds( \m{X}).
\end{align*}
 As $\varepsilon\ll 1$, we have the asymptotic expansion from the definition of $g^e(k;\m{r},\m{r}')$ that   
 $$g^e(k;\m{r},(\varepsilon\m{X},1)) = g^e(k;\m{r},(0,0,1))(1 +O(\varepsilon)).$$
 Together with  the approximation in Lemma~\ref{enhancementpsi}, we obtain for $\m{r}\in \Omega^+\backslash B_1^+$,
\begin{align}
u_\varepsilon^s(\m{r}) &=  -\varepsilon^2 g^e(k;\m{r},(0,0,1))(1 +O(\varepsilon))\int_{D(0,1)}\Psi_1 (\m{R}')ds( \m{R}')\nonumber \\
&=-\varepsilon^2 g^e(k;\m{r},(0,0,1))(\alpha + d_1 \cdot  O(k\varepsilon) + O(k^2\varepsilon^2)) \left(\frac{1}{p}+\frac{1}{q}\right)(1+O(\varepsilon))\nonumber \\
&=-\varepsilon^2 \alpha g^e(k;\m{r},(0,0,1)) \left(\frac{1}{p}+\frac{1}{q}\right)+ d_1 \cdot g^e(k;\m{r},(0,0,1)) \left(\frac{1}{p}+\frac{1}{q}\right) O(k\varepsilon^3).\label{farexpu}
\end{align}
 We readily observe from the formula \eqref{farexpu} that the scattered wave  in the far field behaves as the radiation field of a monopole located at $(0,0,1)$, and the amplitude of this field is  $\varepsilon^2 |\alpha||\frac{1}{p}+\frac{1}{q}|$.  When $k$ takes the value of a resonant frequency, the wave is enhanced due to the term $|\frac{1}{p}+\frac{1}{q}|$. As proved in Lemma~\ref{enhancepq}, at the odd and even resonant frequencies, the values of $\frac{1}{p}$ and $\frac{1}{q}$ are of order $O(k^{-1}\varepsilon^{-2})$ respectively. It directly follows that when $k=\operatorname{Re} k_{n,1}$ or $k=\operatorname{Re} k_{n,2}$, 
\begin{equation*}
u_\varepsilon^s(\m{r}) = \frac{2 i}{k} g^e(k;\m{r},(0,0,1))+O(k\varepsilon),
\end{equation*}
with the enhancement of order $O(\varepsilon^{-2})$ compared to the case when $k$ is not a resonant frequency, which demonstrates that  the field enhancement can be attributed to  the  Fabry-Perot type resonances.

Similarly, for $\m{r}\in\Omega^-\backslash B_1^-$, where $B_1^- = \{\m{r}=(r,\theta, z): |\m{r}-(0,0,0)|\leq 1\}$, one can deduce the formula for $u_\varepsilon^s(\m{r})$,
\begin{equation*}
u_\varepsilon^s(\m{r}) =-\varepsilon^2 \alpha g^e(k;\m{r},(0,0,0)) \left(\frac{1}{p}-\frac{1}{q}\right)+  d_1 \cdot g^e(k;\m{r},(0,0,0))   \left(\frac{1}{p}-\frac{1}{q}\right)O(k\varepsilon^3),
\end{equation*}
which illustrates that the scattered field is equivalent to the radiating field of a monopole located at $(0,0,0)$. Moreover, one can observe the field enhancement of order $O(\varepsilon^{-2})$ at the resonant frequencies due to the terms $\frac{1}{p}$ and $\frac{1}{q}$ in the formula.
\end{subsection}

\begin{subsection}{Enhancement in the hole}
%
Next we investigate the scattered field inside of the hole $\Omega^\varepsilon$, which can be represented as the summation of the eigenfunctions satisfying the homogeneous Neumann boundary conditions, that is, 
\begin{align}
u_\varepsilon^s(\m{r})&=  a_{1,0} \cos(kz)+b_{1,0}\cos(k(1-z))\nonumber \\
&\quad+ \sum_{(m,n)\neq(1,0)}\sum_{\xi \in\{o,e\}} a_{mn\xi}\phi_{mn\xi}(\m{r})\exp(-\gamma_{mn}z)\nonumber\\
&\quad +\sum_{(m,n)\neq(1,0)} \sum_{\xi \in\{o,e\}}b_{mn\xi} \phi_{mn\xi}(\m{r})\exp(-\gamma_{mn}(1-z)), \label{expancy}
\end{align}
where $\gamma_{mn}^2 = \alpha_{mn}^2 -k^2$, and $\gamma_{mn}$ has positive real part. When $k\varepsilon\ll 1$, the coefficients $\{a_{mno},a_{mne}, b_{mno}, b_{mne}\}$ have expansions as shown in the following lemma:
\begin{lemma}\label{a0b0}
The coefficients in the expansion \eqref{expancy} for $u_\varepsilon^s(\m{r})$  satisfy
\begin{align*}
a_{1,0} &= \frac{1}{k\sin k}(\alpha +d_1\cdot O(k\varepsilon) + O(k^2\varepsilon^2)) \left(\frac{1}{p}+\frac{1}{q}\right), \\
b_{1,0} &=\frac{1}{k\sin k}(\alpha +d_1\cdot O(k\varepsilon) + O(k^2\varepsilon^2)) \left(\frac{1}{p}-\frac{1}{q}\right),
\end{align*}
and
\begin{equation}
\sqrt{q_{mn}}a_{mn\xi}\varepsilon\leq C,\quad \sqrt{q_{mn}}b_{mn\xi}\varepsilon\leq C \label{estqmn}
\end{equation}
for $m\geq 1$, $n\geq 0$, $\xi \in\{o,e\}$ and some constant $C$.
\end{lemma}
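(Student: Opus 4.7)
The plan is to project the modal ansatz \eqref{expancy} for $u^s$ inside the cavity onto the cross-sectional basis at the two apertures and then match the resulting Fourier--Bessel coefficients against the traces $\Psi_1,\Psi_2$ already controlled by Lemma~\ref{enhancementpsi}. First I would differentiate \eqref{expancy} in $z$ and evaluate at $z=1$ and $z=0$; together with the continuity of $\partial_z u^s$ across the apertures and the definitions of $\Psi_1,\Psi_2$, this produces the identities
\begin{align*}
\Psi_1(\m{X}) &= -(k\sin k)\,a_{1,0} + \sum_{(m,n)\neq(1,0)}\sum_{\xi\in\{o,e\}}\gamma_{mn}\bigl[b_{mn\xi}-e^{-\gamma_{mn}}a_{mn\xi}\bigr]\Phi_{mn\xi}(\m{X}),\\
\Psi_2(\m{X}) &= -(k\sin k)\,b_{1,0} + \sum_{(m,n)\neq(1,0)}\sum_{\xi\in\{o,e\}}\gamma_{mn}\bigl[a_{mn\xi}-e^{-\gamma_{mn}}b_{mn\xi}\bigr]\Phi_{mn\xi}(\m{X}),
\end{align*}
modulo an overall sign fixed by the outward-normal convention.

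For the propagating coefficients, I would pair each of these two identities with $\Phi_{1,0,e}\equiv 1$. By $L^2(D(0,1))$-orthogonality of $\{\Phi_{mn\xi}\}$, every evanescent term vanishes and $\la\Psi_i,1\ra$ reduces to $-(k\sin k)\,a_{1,0}\|1\|_{L^2(D(0,1))}^2$ and $-(k\sin k)\,b_{1,0}\|1\|_{L^2(D(0,1))}^2$ respectively. Inserting \eqref{gamma1int}--\eqref{gamma2int} from Lemma~\ref{enhancementpsi} and absorbing the numerical factor $\pi=\|1\|_{L^2}^2$ into the leading constant yields the displayed formulas for $a_{1,0}$ and $b_{1,0}$; the prefactor $1/(k\sin k)$ is just the reciprocal of the $z$-derivative of the two longitudinal cosine modes at their far endpoint, which is precisely why it diverges at the Fabry--Perot condition $\sin k=0$.

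For the evanescent coefficients $(m,n)\neq(1,0)$, pairing both identities with $\Phi_{mn\xi}$ and invoking orthogonality again collapses each series to a single mode and yields the $2\times 2$ linear system
\begin{equation*}
\begin{pmatrix} -e^{-\gamma_{mn}} & 1 \\ 1 & -e^{-\gamma_{mn}} \end{pmatrix}
\begin{pmatrix} a_{mn\xi}\\ b_{mn\xi}\end{pmatrix}
= \frac{D_{mn}}{\gamma_{mn}}\begin{pmatrix} \la \Psi_1,\Phi_{mn\xi}\ra\\ \la \Psi_2,\Phi_{mn\xi}\ra\end{pmatrix},
\end{equation*}
whose determinant $e^{-2\gamma_{mn}}-1$ is uniformly bounded away from zero since $\gamma_{mn}\sim q_{mn}/\varepsilon$ is large for all $(m,n)\neq(1,0)$. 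The duality $|\la\Psi_i,\Phi_{mn\xi}\ra|\leq\|\Psi_i\|_{V_1}\|\Phi_{mn\xi}\|_{V_2}$ combined with the Neumann eigenvalue identity $\int_{D(0,1)}|\nabla\Phi_{mn\xi}|^2 = q_{mn}^2\int|\Phi_{mn\xi}|^2$ and the normalization $\|\Phi_{mn\xi}\|_{L^2(D(0,1))}^2 = 1/D_{mn}=O(1)$ gives $\|\Phi_{mn\xi}\|_{H^1(D(0,1))}=O(q_{mn})$, so interpolation yields $\|\Phi_{mn\xi}\|_{H^{1/2}(D(0,1))}\leq C\sqrt{q_{mn}}$. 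Substituting back and using $\gamma_{mn}\sim q_{mn}/\varepsilon$ then gives $|a_{mn\xi}|,|b_{mn\xi}|\leq C/\sqrt{q_{mn}}$, which is \eqref{estqmn}.

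The main obstacle is the uniform fractional Sobolev bound $\|\Phi_{mn\xi}\|_{H^{1/2}(D(0,1))}\lesssim\sqrt{q_{mn}}$: one must verify that the interpolation inequality applies in the precise ambient space $V_2=H^{1/2}(D(0,1))$ and that the constant is independent of $(m,n)$, which relies on the uniformity $D_{mn}=O(1)$ from Section~\ref{asym}. A secondary bookkeeping issue is reconciling signs and the $\pi$-normalization of the $L^2$ inner product between my pairing formulas and the statement of Lemma~\ref{a0b0}; these amount to a harmless renormalization absorbed into the leading constant $\alpha$ and the $O(\cdot)$ remainders.
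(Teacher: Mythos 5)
Your proposal is correct and follows essentially the same route as the paper's proof: differentiating the modal ansatz in $z$, pairing with the constant mode and using \eqref{gamma1int}--\eqref{gamma2int} to extract $a_{1,0},b_{1,0}$, then pairing with the evanescent modes to obtain the same $2\times 2$ system and concluding via the duality bound $\|\Psi_i\|_{V_1}\lesssim 1/\varepsilon$, $\|\Phi_{mn\xi}\|_{V_2}\lesssim\sqrt{q_{mn}}$ and $\gamma_{mn}\sim q_{mn}/\varepsilon$. Your interpolation justification of the $H^{1/2}$ bound and the sign/$\pi$-normalization caveats are minor elaborations of details the paper leaves implicit, not a different argument.
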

\begin{proof}
  Representing $u_\varepsilon^s(\m{r})$ with the expansion \eqref{expancy}, we can  calculate its partial derivative 
\begin{align*}
\frac{\partial u_\varepsilon^s}{\partial z}(\m{r})&= -k a_{1,0} \sin(kz)+k b_{1,0}\sin(k(1-z))\\
&\quad-\sum_{(m,n)\neq(1,0)} \gamma_{mn} J_n(\alpha_{mn} r)( a_{mno}\sin(n\theta)+ a_{mne}\cos(n\theta) )\exp(-\gamma_{mn}z)\\
&\quad +\sum_{(m,n)\neq(1,0)} \gamma_{mn} J_n(\alpha_{mn} r)( b_{mno}\sin(n\theta)+b_{mne}\cos(n\theta) )\exp(-\gamma_{mn}(1-z)).
\end{align*}
Then the normal derivatives of $u_\varepsilon^s$ on $\Gamma_1$ and $\Gamma_2$ are given by
\begin{align}
\frac{\partial u_\varepsilon^s}{\partial z}(r,\theta,1)& = -k a_{1,0} \sin k-\sum_{(m,n)\neq(1,0)} \gamma_{mn} J_n(\alpha_{mn} r)( a_{mno}\sin(n\theta)+ a_{mne}\cos(n\theta) )\exp(-\gamma_{mn})\nonumber \\
&\quad+\sum_{(m,n)\neq(1,0)} \gamma_{mn} J_n(\alpha_{mn} r)( b_{mno}\sin(n\theta)+b_{mne}\cos(n\theta) ),\label{gamma1}\\
\frac{\partial u_\varepsilon^s}{\partial z}(r,\theta,0) &=k b_{1,0}\sin k-\sum_{(m,n)\neq(1,0)} \gamma_{mn}  J_n(\alpha_{mn} r)( a_{mno}\sin(n\theta)+ a_{mne}\cos(n\theta) )\nonumber\\
&\quad+\sum_{(m,n)\neq(1,0)} \gamma_{mn} J_n(\alpha_{mn} r)( b_{mno}\sin(n\theta)+b_{mne}\cos(n\theta) )\exp(-\gamma_{mn}).\label{gamma2}
\end{align}
By the change of variables as in Section~\ref{asym}, there hold for the scaled variable $\m{X}= (R,\theta)\in D(0,1)$ that
\begin{align*}
 \frac{\partial u_\varepsilon^s}{\partial z}(\varepsilon\m{X},1)= -\Psi_1(\m{X}), \\
 \frac{\partial u_\varepsilon^s}{\partial z}(\varepsilon\m{X},0)= \Psi_2(\m{X}), 
\end{align*}
 which, together with the expansions \eqref{gamma1int}--\eqref{gamma2int} and  \eqref{gamma1}--\eqref{gamma2}, lead to \begin{align*}
-k a_{1,0} \sin k &= -\langle \Psi_1, 1\rangle \nonumber \\
&=-(\alpha +d_1\cdot  O(k\varepsilon) + O(k^2\varepsilon^2)) \left(\frac{1}{p}+\frac{1}{q}\right), \\
kb_{1,0} \sin k &=\langle \Psi_2, 1\rangle \nonumber \\
&=(\alpha +d_1\cdot O(k\varepsilon) + O(k^2\varepsilon^2)) \left(\frac{1}{p}-\frac{1}{q}\right).
\end{align*}
Thus one obtain  approximations of $a_{1,0}$ and $b_{1,0}$,
\begin{align*}
a_{1,0} &= \frac{1}{k\sin k}(\alpha +d_1\cdot O(k\varepsilon) + O(k^2\varepsilon^2)) \left(\frac{1}{p}+\frac{1}{q}\right), \\
b_{1,0} &=\frac{1}{k\sin k}(\alpha +d_1\cdot O(k\varepsilon) + O(k^2\varepsilon^2)) \left(\frac{1}{p}-\frac{1}{q}\right).
\end{align*}

Next we estimate $\{a_{mne}, b_{mne}\}$. Representing the partial derivatives with  the formulas  \eqref{gamma1} and \eqref{gamma2}, we integrate  $\frac{\partial u_\varepsilon^s}{\partial z}(r,\theta,1)$ and $\frac{\partial u_\varepsilon^s}{\partial z}(r,\theta,0)$ with $J_n(\alpha_{mn} r)\cos(n\theta)$ over  $\Gamma_1$  and $\Gamma_2$ respectively:
\begin{align}
\gamma_{mn} \frac{1}{d_{mn}}(-a_{mne}\exp(-\gamma_{mn})+b_{mne})&= \int_{\Gamma_1} \frac{\partial u_\varepsilon^s}{\partial z}(r,\theta,1) J_n(\alpha_{mn} r) \cos(n\theta) ds(\m{r})\nonumber \\
&=-\varepsilon^2\langle \Psi_1, J_n(q_{mn} R) \cos(n\theta) \rangle, \label{expab}\\
\gamma_{mn} \frac{1}{d_{mn}}(-a_{mne}+b_{mne}\exp(-\gamma_{mn}))&= \int_{\Gamma_2} \frac{\partial u_\varepsilon^s}{\partial z}(r,\theta,0) J_n(\alpha_{mn} r) \cos(n\theta) ds(\m{r})\nonumber\\
&=-\varepsilon^2\langle \Psi_2, J_n(q_{mn} R) \cos(n\theta) \rangle,\label{expba}
\end{align}
where we have used the change of variables. From \eqref{expab} and \eqref{expba} we can deduce that $a_{mne}$, $b_{mne}$ satisfy
\begin{align}
\gamma_{mn}a_{mne} = \frac{d_{mn} \varepsilon^2}{\exp(-2\gamma_{mn})-1}(\exp(-\gamma_{mn})\langle \Psi_1, J_n(q_{mn} R) \cos(n\theta) \rangle-\langle \Psi_2, J_n(q_{mn} R) \cos(n\theta) \rangle),\label{amne}\\
\gamma_{mn}b_{mne} = \frac{d_{mn} \varepsilon^2}{\exp(-2\gamma_{mn})-1}(\exp(-\gamma_{mn})\langle \Psi_2, J_n(q_{mn} R) \cos(n\theta) \rangle-\langle \Psi_1, J_n(q_{mn} R) \cos(n\theta) \rangle).\label{bmne}
\end{align}
It follows from Lemma~\ref{enhancementpsi} that  
\begin{align*}
\Vert\Psi_1\Vert_{V_1}\lesssim \frac{1}{\varepsilon^2},\quad\Vert\Psi_2\Vert_{V_1}\lesssim \frac{1}{\varepsilon^2}.
\end{align*}
At the same time, from the definitions of Bessel functions, we have estimates
\begin{align*}
\Vert J_n(q_{mn} R) \cos(n\theta) \Vert_{L^2(D(0,1)}&\lesssim 1,\\
\Vert J_n(q_{mn} R) \cos(n\theta) \Vert_{H^1(D(0,1)}&\lesssim q_{mn},
\end{align*}
which, together with the Gagliardo-Nirenberg interpolation inequality,  yields
\begin{equation*}
\Vert J_n(q_{mn} R) \cos(n\theta) \Vert_{V_2}\lesssim \sqrt{q_{mn}}.  
\end{equation*}
Noting that $\gamma_{mn} = O(\frac{q_{mn}}{\varepsilon})$, $d_{mn}\varepsilon^2=O(1)$ for $(m,n)\neq(1,0)$,  we can conclude from \eqref{amne}--\eqref{bmne} with estimates above that 
\begin{equation*}
\sqrt{q_{mn}}|a_{mne}|\lesssim  O(\varepsilon^{-1}),\quad \sqrt{q_{mn}}|b_{mne}|\lesssim O(\varepsilon^{-1}),
\end{equation*}
which verifies \eqref{estqmn}. Following the  similar argument we could deduce the estimates for $\{a_{mno}, b_{mno}\}$.
\end{proof}
 With these expansions of coefficients in Lemma~\ref{a0b0}, we are ready to present the asymptotic expansions of $u_\varepsilon^s(\m{r})$ in $\Omega^\varepsilon$ in the following theorem.
\begin{theorem*}\label{intus}
The scattered field in the hole $\Omega^\varepsilon$ is given by
\begin{align*}
u_\varepsilon^s(\m{r}) &=\left(\frac{1}{\varepsilon^2} +d_1\cdot O(\varepsilon^{-1}) \right)\cdot\frac{2i}{(2n-1)\pi k\sin (k/2)}\cdot\cos(k(z-1/2))+\frac{\sin(k(z-1/2))}{\sin (k/2)}+O(\varepsilon),\\
u_\varepsilon^s(\m{r}) &=-\left(\frac{1}{\varepsilon^2} +d_1\cdot O(\varepsilon^{-1}) \right)\cdot \frac{i}{ n\pi k \cos(k/2)}\cdot\sin(k(z-1/2))+\frac{ \cos(k(z-1/2))}{\cos (k/2)}+O(\varepsilon),
\end{align*}
at the resonant frequencies $k = \operatorname{Re} k_{n,1}$ and $k=   \operatorname{Re} k_{n,2} $ respectively.
\end{theorem*}
\begin{proof}
Plugging the estimates of $a_{1,0}$, $b_{1,0}$, and $\{a_{mno},a_{mne}, b_{mno}, b_{mne}\}$ from Lemma~\ref{a0b0} into \eqref{expancy}, one has the expansion of $u_\varepsilon^s(\m{r})$ inside $\Omega^\varepsilon$ as 
\begin{align}
u_\varepsilon^s(\m{r})&=  a_{1,0} \cos(kz)+b_{1,0}\cos(k(1-z))+O(\varepsilon^{-1}\exp(-\varepsilon^{-1}))\nonumber \\
&=\frac{1}{k\sin k}\left(\alpha +d_1\cdot O(k\varepsilon) + O(k^2\varepsilon^2)\right)\nonumber\\
&\quad\cdot\left( \left(\frac{1}{p}+\frac{1}{q}\right)\cos(kz)+\left(\frac{1}{p}-\frac{1}{q}\right)\cos(k(1-z))\right)+O(\varepsilon^{-1}\exp(-\varepsilon^{-1}))\nonumber\\
&=\frac{1}{k\sin k}(\alpha +d_1\cdot O(k\varepsilon) + O(k^2\varepsilon^2))\nonumber\\
&\quad\cdot\left( (\cos(kz)+\cos(k(1-z)))\frac{1}{p}+(\cos(kz)-\cos(k(1-z)))\frac{1}{q}\right)+O(\varepsilon^{-1}\exp(-\varepsilon^{-1})).\label{uodd}
\end{align} 
It is noted that from Lemma~\ref{enhancepq}, at resonant frequency $k=\operatorname{Re} k_{n,1}$, 
\begin{equation}
\frac{1}{p}=\frac{2i}{(2n-1)\pi\alpha \varepsilon^2 }+O(\varepsilon^3) , \quad \frac{1}{q} = \frac{k\sin k}{(\cos k -1)\alpha}(1+O(\varepsilon)).\label{pqin}
\end{equation}
Then we plug \eqref{pqin} into \eqref{uodd} to deduce the representation of $u_\varepsilon^s(\m{r})$ at $k=\operatorname{Re}  k_{n,1}$:
\begin{align*}
u_\varepsilon^s(\m{r})& = \frac{1}{k\sin k}(1 +d_1\cdot O(k\varepsilon) + O(k^2\varepsilon^2))\cdot\bigg( 2\cos\left(k/2\right)\cos\left(k\left(z-1/2\right)\right)\frac{2i}{\varepsilon^2(2n-1)\pi}+O(\varepsilon^3) \\
&\quad-2\sin (k/2)\sin(k(z-1/2))\frac{k\sin k}{\cos k -1}(1+O(\varepsilon) )\bigg)+O(\varepsilon^{-1}\exp(-\varepsilon^{-1}))\\
&=(1 +d_1\cdot O(k\varepsilon) + O(k^2\varepsilon^2))\cdot\frac{2\cos(k(z-1/2))i}{\varepsilon^2(2n-1)\pi k\sin (k/2)}+O(\varepsilon^3)+\frac{\sin(k(z-1/2))}{\sin (k/2)}(1+O(\varepsilon)
)\\
&=(\frac{1}{\varepsilon^2} +d_1\cdot O(\frac{k}{\varepsilon}) + O(k^2))\cdot\frac{2\cos(k(z-1/2))i}{(2n-1)\pi k\sin (k/2)}+\frac{\sin(k(z-1/2))}{\sin (k/2)}+O(\varepsilon).
\end{align*}
Similarly at even frequencies $k=\operatorname{Re}  k_{n,2}$, one obtain 
  \begin{equation}
\frac{1}{p} =\frac{k\sin k}{\alpha(\cos k +1)}(1+O(\varepsilon)), \quad \frac{1}{q}=\frac{i}{\alpha n\pi \varepsilon^2}+O(\varepsilon^3).
 \end{equation}

Then we  have the representation of $u_\varepsilon^s(\m{r})$ as
\begin{align*}
u_\varepsilon^s(\m{r})&=\frac{1}{k\sin k}(\alpha +d_1\cdot O(k\varepsilon) + O(k^2\varepsilon^2))\nonumber\\
&\quad\cdot\left(  2\cos(k/2)\cos(k(z-1/2))\frac{1}{p}- 2\sin(k/2)\sin(k(z-1/2))\frac{1}{q}\right)+O(\varepsilon^{-1}\exp(-\varepsilon^{-1}))\\
&=(1 +d_1\cdot O(k\varepsilon) + O(k^2\varepsilon^2))\nonumber\\
&\quad \cdot\left( \left(\frac{ \cos(k(z-1/2))}{\cos (k/2)}(1+O(\varepsilon))\right)+\left(\frac{-i\sin(k(z-1/2))}{k\cos(k/2)  n \pi\varepsilon^2}+O(\varepsilon^3)\right)\right)+O(\varepsilon^{-1}\exp(-\varepsilon^{-1}))\\
&=-\left(\frac{1}{\varepsilon^2} +d_1\cdot O(k\varepsilon^{-1}) + O(k^2)\right)\cdot\frac{\sin(k(z-1/2))i}{ n\pi k \cos(k/2)}+\frac{ \cos(k(z-1/2))}{\cos (k/2)}+O(\varepsilon).
\end{align*}
\end{proof}
It is shown in Theorem~\ref{intus} that the enhancement inside the hole at resonant frequencies is of order $O(\varepsilon^{-2})$.  Moreover,  we can  observe from Theorem~\ref{intus} that the dominant resonant modes are simply $\cos(k(z-1/2))$ and $\sin(k(z-1/2))$ in the cylindrical hole at the odd and even resonant frequencies respectively.

\end{subsection}

\end{section}
\begin{section}{Quantitative analysis of the field enhancement in the nonresonant quasi-static regime }\label{quantnon}
Except for the resonant frequencies, we are also interested in the asymptotic behavior of the scattered field in the quasi-static regime, i.e., when the wavenumber $k \ll 1$. In \cite{lin2015electromagnetic} it is reported that for the electromagnetic scattering, there exists strong enhancement of the electric field inside the subwavelength slit in the quasi-static regime, which motivates us to carry out similar investigations on the acoustic field. 
 As shown in Theorem~\ref{thmreso}, there exists no Fabry-Perot type  resonance when $k\ll 1$. And according to the formula \eqref{farexpu}, no field enhancement of the scattered pressure field will be observed in the far field. Thus we do not elaborate the investigation on scattered wave in the far field, and turn to the asymptotic expansion of the scattered field $u_\varepsilon^s(\m{r})$ inside of the circular hole $\Omega^\varepsilon$ in the quasi-static regime. 

Following the formula \eqref{expancy}, we obtain the following expansion of the scattered field inside the hole: 
\begin{align}
u_\varepsilon^s(\m{r})&:=  u_0(\m{r})+ u_\infty\nonumber \\
&=(\alpha +d_1\cdot  O(k\varepsilon) + O(k^2\varepsilon^2))\cdot\left(\frac{\cos (kz)}{k \sin k}\left(\frac{1}{p}+\frac{1}{q}\right)+\frac{\cos (k(1-z))}{k \sin k}\left(\frac{1}{p}-\frac{1}{q}\right)\right)+u_\infty, \label{suinside}
\end{align}
where $u_\infty = O(\varepsilon^{-1}\exp(-\varepsilon^{-1}))$.
If $k\ll 1$, from the Taylor' expansions over $k$ of the formulas \eqref{expofp}--\eqref{expofq} of $p$, $q$,
we can derive 
\begin{align*}
\frac{1}{k \sin k}\frac{1}{p}= \frac{1}{(\cos k +1)\alpha}(1+O(k^2\varepsilon^2)), \\
\frac{1}{k \sin k}\frac{1}{q}= \frac{1}{(\cos k -1)\alpha}(1+O(k^2\varepsilon^2)). 
\end{align*}
Plugging these expansions above into \eqref{suinside}, one can deduce that   
\begin{align*}
u_0(\m{r})&= (\alpha +d_1\cdot  O(k\varepsilon) + O(k^2\varepsilon^2)) \left(\frac{\cos (kz)}{k \sin k}\left(\frac{1}{p}+\frac{1}{q}\right)+\frac{\cos (k(1-z))}{k \sin k}\left(\frac{1}{p}-\frac{1}{q}\right)\right)\\
&= (1 + d_1\cdot O(k\varepsilon) + O(k^2\varepsilon^2))(1+O(k^2\varepsilon^2)) \\
&\quad\cdot\left(\cos (kz)\left(\frac{1}{\cos k +1}+\frac{1}{\cos k -1}\right)+\cos (k(1-z))\left(\frac{1}{\cos k +1}-\frac{1}{\cos k -1}\right)\right)\\
&=(1 + d_1\cdot  O(k\varepsilon) + O(k^2\varepsilon^2))\frac{2\sin(kz)  }{\sin k}.
\end{align*}
From the asymptotic expansion over $k$ of this formula, one obtain
\begin{equation}
u_0(\m{r})=2z+ O(k^2)+ d_1\cdot O(k\varepsilon) + O(k^2\varepsilon^2),\label{prequasi}
\end{equation}  
which implies that the scattered pressure field inside the cylindrical hole is not enhanced when $k\ll 1$ and $\varepsilon\ll 1$.

 Although the scattered pressure field $u_\varepsilon^s(\m{r})$ is not enhanced compared to the incident pressure field,
it is noted that the leading-order term $u_0(\m{r})$ of $u_\varepsilon^s(\m{r})$ has constant gradient along $z$ axis, and we shall investigate the asymptotic behavior of the velocity field $\m{p}^s(\m{r}): = \frac{1}{k}\nabla u_\varepsilon^s(\m{r})$.
\begin{theorem*}\label{enhancequasihole}
When $k\ll 1$, $\varepsilon\ll 1$, the scattered velocity field $\m{p}^s(\m{r})=(p_1^s, p_2^s, p_3^s)$ satisfies:
\begin{align*}
p^s_1(\m{r})&\sim O(k)+  O(\varepsilon)+O(k^{-1}\varepsilon^{-1}\exp(-\varepsilon^{-1})),\\
p^s_2(\m{r})&\sim O(k)+  O(\varepsilon)+O(k^{-1}\varepsilon^{-1}\exp(-\varepsilon^{-1})),\\
p^s_3(\m{r})&= \frac{2}{k}+O(k)+d_1\cdot O(\varepsilon)+O(k^{-1}\varepsilon^{-1}\exp(-\varepsilon^{-1})).
\end{align*}
\end{theorem*}
\begin{proof}
From  the expansion \eqref{prequasi} of $u_0(\m{r})$ for  $\m{r}\in\Omega^\varepsilon$, using the Cartesian coordinate system $\m{r} = (x,y,z)$, one can calculate by definitions that:
\begin{align*}
p^s_1(\m{r})&= \dfrac{1}{k}\dfrac{\partial u_0}{\partial x}(\m{r})+ \dfrac{1}{k}\dfrac{\partial u_\infty}{\partial x}(\m{r}) = O(k)+ d_1\cdot O(\varepsilon)+O(k^{-1}\varepsilon^{-1}\exp(-\varepsilon^{-1})),\\
p^s_2(\m{r})&=\dfrac{1}{k} \dfrac{\partial u_0}{\partial x}(\m{r})+ \dfrac{1}{k}\dfrac{\partial u_\infty}{\partial y}(\m{r}) = O(k)+d_1\cdot O(\varepsilon)+O(k^{-1}\varepsilon^{-1}\exp(-\varepsilon^{-1})),\\
p^s_3(\m{r})&=\dfrac{1}{k}\dfrac{\partial u_0}{\partial x}(\m{r})++ \dfrac{1}{k}\dfrac{\partial u_\infty}{\partial z}(\m{r}) = \dfrac{2}{k}+O(k)+d_1\cdot O(\varepsilon)+O(k^{-1}\varepsilon^{-1}\exp(-\varepsilon^{-1})),
\end{align*}
and the desired conclusion follows.
\end{proof}
 The expansions in Theorem~\ref{enhancequasihole} for $\ell=1$ can be generated to the case when $\ell\neq 1$ by a scaling argument and concluded as the following proposition.

\begin{proposition*}
If $k\ll 1$ and $\varepsilon\ll \min\{\ell,1\}$, the scattered velocity field $\m{p}^s(\m{r})=(p_1^s, p_2^s, p_3^s)$ satisfies:
\begin{align*}
p^s_1(\m{r})&\sim O(k\ell)+  O(\varepsilon)+O((k \ell )^{-1}\varepsilon^{-1}\exp(-\varepsilon^{-1})),\\
p^s_2(\m{r})&\sim O(k\ell)+  O(\varepsilon)+O((k\ell)^{-1}\varepsilon^{-1}\exp(-\varepsilon^{-1})),\\
p^s_3(\m{r})&= \frac{2}{k\ell}+O(k\ell)+d_1\cdot O(\varepsilon)+O((k\ell)^{-1}\varepsilon^{-1}\exp(-\varepsilon^{-1})).
\end{align*}
\end{proposition*}

The corresponding velocity fields to the incident pressure field $u^i(\m{r}) = e^{ik (d_1 r\cos\theta -d_3(z-\ell))}$ and  the reflected pressure  field $u^r(\m{r}) = e^{ik (d_1 r\cos\theta+d_3(z-\ell))}$ are
\begin{equation}
\m{p}^i(\m{r}) = e^{ik (d_1 r\cos\theta -d_3(z-\ell))}(d_1,0,d_3) i=O(1), \quad \m{p}^r(\m{r}) = e^{ik (d_1 r\cos\theta+d_3(z-\ell))}(d_1, 0, -d_3) i=O(1).
\end{equation}

Thus we conclude that compared to the incident and reflected velocity fields, the velocity field inside the hole is enhanced with an enhancement order of $O(\frac{1}{k\ell})$ in the nonresonant quasi-static regime due to the fast transition of the pressure field from the upper to the lower circular aperture.

\end{section}
\begin{section}{Conclusion}
In this work we have studied the mechanism of the field enhancement caused by a circular hole perforated in an infinite plate at the resonant frequencies and in the quasi-static regime. We have derived the Green's function of each subdomain to build the boundary-integral equations, and carried out asymptotic expansions to rigorously prove the existence of Fabry-Perot type resonances. We have derived the asymptotic expansions for those resonances, and  quantitatively analyzed the field enhancement near these resonant frequencies.  The asymptotic behavior of fields at nonresonant frequencies in the quasi-static regime has also been investigated. 
 We have proved that the enhancement of the pressure field with an order of $O(\varepsilon^{-2})$ occurs at the Fabry-Perot type resonances, and the enhancement of the velocity field with an order of $O((k\ell)^{-1})$ occurs at the nonresonant frequencies in the quasi-static regime.
 The study of the single aperture could be extended to the field enhancement of other subwavelength structures that use the single aperture as their building block.

\end{section}
\section*{Acknowledgement}
The authors wish to thank Prof. Hai Zhang from HKUST for his many  insightful suggestions and helpful discussions which make this work more comprehensive.

\bibliographystyle{unsrt}

\end{document}